
\documentclass[twocolumn]{autart}    

\usepackage{latexsym,amssymb}
\usepackage{mathrsfs}
\usepackage{epsfig}
\usepackage{graphicx}
\usepackage{color}
\usepackage{enumerate}
\usepackage{amsmath}
\usepackage{breqn}
\usepackage{times}

\newtheorem{theorem}{Theorem}

\newtheorem{proposition}{Proposition}
\newtheorem{assumption}{Assumption}
\newtheorem{lemma}{Lemma}
\newtheorem{remark}{Remark}

\newenvironment{proof}{\medskip\noindent{\it Proof. }}{ \medskip}
\def\begequarr{\begin{eqnarray}}
\def\endequarr{\end{eqnarray}}
\def\begequarrs{\begin{eqnarray*}}
\def\endequarrs{\end{eqnarray*}}
\def\begequ{\begin{equation}}
\def\endequ{\end{equation}}
\def\begequs{\begin{equation*}}
\def\endequs{\end{equation*}}
\def\begite{\begin{itemize}}
\def\endite{\end{itemize}}

\def\begcen{\begin{center}}
\def\endcen{\end{center}}
\def\begrem{\begin{remark}\rm}
\def\endrem{\end{remark}}
\def\ba{\begin{array}}
\def\ea{\end{array}}


\def\col{ \mbox{col}\; }

\def\dst{\displaystyle}

\def\beeq#1{\begin{equation}{#1}\end{equation}}

\setlength{\parindent}{1em} 
\hoffset-.1in \voffset -.2in \textheight 9.8in \textwidth 7.3in

\begin{document}

\begin{frontmatter}

\title{Robust Implementable Regulator Design of General Linear Systems} 

\vspace{-2.5em}
\author[UNSD]{Lei Wang}\ead{lei.wang2@sydney.edu.au},
\author[Bol]{Lorenzo Marconi}\ead{lorenzo.marconi@unibo.it},
\author[UoN]{Christopher M. Kellett}\ead{chris.kellett@anu.edu.au}

\vspace{-0.5em}
\address[UNSD]{Australia Centre for Field Robotics, The University of Sydney, Australia.}
\vspace{-0.5em}
\address[Bol]{C.A.S.Y - DEI, University of Bologna, Italy.}
\vspace{-0.5em}
\address[UoN]{Research School of Electrical, Energy, and Materials Engineering, Australian National University, Australia.}


\begin{keyword}                          
Robust output regulation; Sampling; Washout filter; Internal model; Generalized hold device
\end{keyword}
\vspace{-1.5em}
\begin{abstract}                          
Robust implementable output regulator design approaches are studied for general linear continuous-time \mbox{systems} with periodically sampled measurements, consisting of both the regulation errors and extra measurements that are generally non-vanishing in steady state. A digital regulator is first developed via the conventional emulation-based approach, rendering the regulation errors asymptotically bounded with a small sampling period. We then develop a hybrid design framework by incorporating a generalized  hold device, which transforms the original problem into the problem of designing an output feedback controller fulfilling two conditions for a discrete-time system. We show that such a controller can always be obtained by designing a discrete-time internal model, a discrete-time washout filter, and a discrete-time output feedback stabilizer. As a result, the regulation errors are shown to be globally exponentially convergent to zero, while the sampling period is fixed but can be arbitrarily large.
This design framework is further developed for a multi-rate digital regulator with a large sampling period of the measurements and a small control execution period.
\end{abstract}

\end{frontmatter}

\section{Introduction}

The problem of output regulation is to design a controller so as to achieve asymptotic trajectory tracking and/or disturbance compensation in the presence of reference/disturbance signals that are trajectories of an autonomous system (the so-called exosystem).
Taking robustness into consideration, the internal model principle has been shown as the most effective design and analysis tool in the seminal work \cite{Francis1976} for linear continuous-time systems. Internal model-based methods have been well developed for continuous-time nonlinear systems with continuous measurements (e.g., \cite{Isidori1990,Huang2004,Wang2020}), hybrid systems (e.g., \cite{Marconi2013,Forte2017,Carnevale2017}) and networked systems (e.g., \cite{WangAut2019,Isidori2014}).

In general, the internal model-based regulator consists of two main components: the internal model  and the stabilizer. There are two kinds of control architectures, referred to as post-processing and pre-processing schemes (see e.g. \cite{Isidori2012}), depending on the topology used to connect the internal model and stabilizing units. As for the pre-processing scheme, the stabilizer is directly cascaded with the controlled plant, processing the regulation errors. In post-processing schemes, on the other hand,  the internal model is cascaded with the plant by processing the regulation errors. For single-input single-error (SISE) systems, both schemes are fundamentally equivalent with  pre-processing schemes that have been shown to be more constructive in some cases, such as for nonlinear systems with non-vanishing extra measurements (e.g. \cite{Wang2020,Toledo2006}), and multi-rate systems (e.g. \cite{Antunes2014}).

The measurements for feedback design are generally obtained from periodically sampling sensors, whose measure is not accessible continuously, and the regulator is practically implemented by digital devices or in combination with some simple analog devices, for example generalized zero-order hold devices.  In \cite{Castillo1997} for linear continuous-time systems, a state-feedback solution is studied locally by proposing a fully discrete-time regulator, which fulfills the internal model principle solely at the sampling time by thus guaranteeing only practical regulation. To achieve asymptotic regulation, in \cite{Lawrence2001} a hybrid internal model is proposed. It is shown that the continuous-time steady state input can be generated by means of continuous-time internal models  acting as ``generalized signal re-constructors" and there always exists a discrete-time stabilizer achieving the desired regulation purpose, though in the absence of robustness analysis.
Motivated by this work, \cite{Marconi2013} further develops a robust solution for SISE linear systems. In addition, \cite{Astolfi2018ECC,Astolfi2019Nolcos,Liu2020} adopt the emulation-based method by sampling the measurements and the control inputs in the absence of samples, which requires the sampling period to be small and renders the regulation errors bounded in a general sense.
In all the aforementioned results, the controlled continuous-time systems are required to be detectable by the regulation errors, which might not be fulfilled in practice, for example, as with the inverted pendulum on a cart considered in Section \ref{sec-5} below.

Motivated by the previous analysis, this paper studies the robust sampled-data regulation problem of \emph{general} linear continuous-time systems, for which the detectability property is fulfilled by the whole set of measurements, consisting of both the regulation errors and possible extra non-vanishing measurements. Motivated by \cite{Astolfi2018ECC,Astolfi2019Nolcos}, we first present an emulation-based solution, which under a small sampling period, renders the regulation errors asymptotically bounded by a constant, depending on the time derivative of the steady states of both extra measurements and control inputs, and adjustable by the sampling period.  In view of this, we then develop a hybrid design framework by incorporating a generalized  hold device. By cascading this device with the controlled plant, it is shown that the desired regulation objective can be achieved by designing a discrete-time output feedback stabilizer fulfilling two conditions, i.e., stabilizing the closed loop at the origin and compensating for the steady state input. Inspired by \cite{Wang2020}, to fulfill both conditions, we further propose a discrete-time internal model and a discrete-time washout filter, which in turn simplifies the problem to the design of a discrete-time output feedback stabilizer for an augmented discrete-time linear system that is stabilizable and detectable.
As a result, the regulation errors are shown to be globally exponentially convergent to zero, while the sampling period is fixed but can be arbitrarily large. By regarding both the generalized  hold device and the discrete-time internal model as a hybrid internal model unit, we note that the proposed robust implementable regulator is consistent with the pre-processing internal model-based
structure proposed in \cite{Wang2020}.
Furthermore, this design framework is developed for a multi-rate digital regulator with a large sampling period of the measurements and a small control execution period. We show that given almost any large sampling period of the measurements, the regulation errors can be rendered to be bounded by a constant, depending on the time derivative of the desired steady states of control inputs, and adjustable by the control execution period.

This paper is organized as follows. In Section 2 the considered problem is explicitly formulated and some standing assumptions are presented. Section 3 presents the emulation-based approach, which then motivates us to propose a new implementable regulator design framework via generalized hold devices in Section \ref{sec-4}. In Section \ref{sec-5}, this framework is further explored for a multi-rate digital regulator. To show the effectiveness of the proposed approach, the linearly approximated model of the inverted pendulum on the cart is studied in Section 6. Conclusions are presented in Section 7.
This paper is different from the conference version \cite{Wang2020IFAC} by additionally presenting the motivating emulation-based approach in Section 3 and developing the multi-rate digital regulator in Section \ref{sec-5}.

\section{Problem Statement}
\label{Ssec-problem}
Consider the output feedback regulation problem for linear systems
\beeq{\label{ini-sys}\ba{rcl}
\dot w &=& Sw\,\\
\dot x &=& A\, x + B\, u + P\,w\,\\
y &=& C\,x + Q\,w
\ea}
with exogenous states $w\in\mathbb{R}^d$, states $x\in\mathbb{R}^{n}$, inputs $u\in\mathbb{R}^m$ and measurements $y\in\mathbb{R}^{q}$.
We deal with a \emph{general} class of linear systems in which the measurements $y$ consist of regulation errors $e(t):=C_ex(t)+ Q_ew(t)\in\mathbb{R}^{q_e}$, to be steered to zero asymptotically, and also extra measurements $y_m(t):=C_mx(t) + Q_mw(t)\in\mathbb{R}^{q_m}$ on which no specific regulation requirements are fixed, with $q=q_e+q_m$. Considering the practical situation in which the measurements are generally obtained in a discrete-time manner, i.e.,  periodically sampled with  sample time $T>0$, the measurements available for feedback are given by the sampled regulation errors
$\hat e(t):=C_ex(t_k)+ Q_ew(t_k)$ and the sampled extra measurements $\hat y_m(t):=C_mx(t_k) + Q_mw(t_k)$ for $t\in[t_k,t_{k+1})$, $t_k=kT$ and $k\in\mathbb{N}$. As customary in the field of output regulation, we assume that $S$ is neutrally stable and there exists an invariant compact set $\mathcal{W}\in\mathbb{R}^d$ such that $w(t)\in\mathcal{W}$ for all $t\geq0$. For convenience, we set $|\mathcal{W}|:=\max_{w\in\mathcal{W}}\|w\|$.

In this setting, the control objective is to design a \emph{robust implementable} regulator driven by the sampled measurements $(\hat e(t),\hat y_m(t))$ such that the resulting closed-loop trajectories are bounded, and the \emph{continuous-time} regulation errors $e(t)$ asymptotically converge to zero. As in \cite{Francis1976,Wang2020}, we are interested in a \emph{robust} solution, i.e., the above control objective is  guaranteed even if all system  matrices of (\ref{ini-sys})  except $S$ vary in a (small) neighborhood of their nominal forms\footnote{If there exist uncertainties on the matrix $S$, then the idea of an adaptive internal model (e.g. \cite{Serrani2001,WangTAC2019}) can be employed.}.
Additionally, this paper presents an \emph{implementable} solution that can be directly implemented by purely digital devices, or together with some simple analog devices, such as generalized  hold devices (see \cite{Lawrence2001}).

Due to the presence of the sampled measurements, the resulting system is fundamentally hybrid. In this paper, we will follow terminologies from \cite{Goebel2012} to denote the time $t\in[t_k,t_{k+1})$ by a hybrid time domain $(t,k)$, and represent a hybrid system as a combination of  flow  and  jump dynamics, which are respectively described by  differential  and  difference equations. The action of sampling the measurements $e$ and $y_m$ leads to a measurement model of the kind
\beeq{\label{eq:measurements}
\ba{l}
\left\{
\ba{rcl}
\dot \tau &=& 1\,\\
\dot {\hat e} &=& 0\,\\
\dot {\hat y}_m &=& 0\,
\ea
\right. \,,\qquad \qquad \qquad \mbox{for }(\tau,\hat e,\hat y_m)\in[0,T)\times\mathbb{R}^{q}\\
\left\{
\ba{rcl}
\tau^+ &=& 0\,\\
\hat e^+ &=& C_ex+Q_ew \,\\
\hat y_m^+ &=& C_mx+Q_mw\,
\ea
\right. \,,\quad \mbox{for }(\tau,\hat e,\hat y_m)\in \{T\}\times\mathbb{R}^{q}
\ea}
in which $\tau$ is a clock state, and $\hat e,\hat y_m$ are the sampled measurements available for feedback.
In the subsequent Sections 3 and 4, the flow and jump conditions are governed by the clock $\tau$ only as in (\ref{eq:measurements}), i.e., the flow occurs for $\tau\in[0,T)$ and the jump occurs for $\tau=T$, which will be occasionally omitted for simplicity.

Throughout this paper, for any square matrix $M$, we denote by $\sigma(M)$ its spectrum. We make some standard assumptions previously used for a robust continuous-time solution (e.g. \cite{Francis1976}).
\begin{assumption}\label{ass-1}
$\,$
\begin{itemize}\vspace{-1em}
  \item[(i)] The matrix triplet $(A,B,C)$ is stabilizable and detectable;
  \item[(ii)] There holds the non-resonance condition
\beeq{\label{NRC}
\mbox{rank}\begin{bmatrix}
A-\lambda I_n & B \cr C_e & 0
\end{bmatrix}
= n+q_e\,,\quad \forall \lambda\in\sigma(S)\,.
}
\end{itemize}
\end{assumption}

Assumption \ref{ass-1} immediately implies that for any pair of matrices $(P,Q_e)$, there exist $\Pi_x\in\mathbb{R}^{n\times d}$ and $\Psi\in\mathbb{R}^{m\times d}$ such that  the regulator equations
\beeq{\label{RE-3}
\ba{rcl}
\Pi_x S &=& A\Pi_x + B\Psi + P\,\\
0 &=& C_e \Pi_x + Q_e\,
\ea
}
are satisfied.

%
As in \cite{Castillo1997,Lawrence2001}, in order to preserve the stabilizability and detectability of system (\ref{ini-sys}) after discretization,  the following assumption is made.
\vspace{-0.5em}
\begin{assumption}\label{ass-2}
The sampling period $T$ is not pathological from the pair $(A,S)$. That is, for any distinct $\lambda_i,\lambda_j\in\sigma(A)\bigcup\sigma(S)$,
\beeq{\label{eq:sample-T}
\lambda_i-\lambda_j \neq 2k\pi / T\,, \quad \mbox{ for any $k\in\mathbb{N}$}\,.
}
\end{assumption}

\vspace{-1.5em}
Note that condition (\ref{eq:sample-T}) is  generically satisfied for all sampling time $T\in\mathbb{R}_+$, given any matrices $A,S$.

\begin{remark}
We note that this paper follows the internal model-based regulator design framework, where solutions of the regulator equations (\ref{RE-3}) are not used for feedback design, and the internal model is designed to  compensate for the steady state input $\Psi w(t)$. Thus, as in other relevant works \cite{Francis1976,Huang2004,Wang2020}, the presented designs yield a robust regulation with respect to small variations of the system matrices of (\ref{ini-sys})  except $S$.
\end{remark}

\section{An Emulation-based Approach}
\label{sec-MR}

Before presenting the proposed framework, this section aims to present how the conventional emulation-based approach \cite{Astolfi2018ECC,Astolfi2019Nolcos} can be applied to solve the considered problem.

To apply the emulation-based approach, we first assume that the \emph{continuous-time} measurements $y(t)$ are accessible and investigate a robust regulator driven by $y(t)=(e(t),y_m(t))$. Typically, there are two internal model-based control schemes: post-processing \cite{Francis1976} and pre-processing \cite{Wang2020} methods, both leading to a robust dynamical regulator of the form
\vspace{-1em}\beeq{\label{eq:regulator-Em}
\ba{rcl}
\dot x_c &=& A_c x_c + B_c y\,,\quad x_c\in\mathbb{R}^{n_c}\\
u &=& C_c x_c\,
\vspace{-0.5em}\ea}
where matrices $A_c\in\mathbb{R}^{n_c\times n_c},B_c\in\mathbb{R}^{n_c\times q},C_c\in\mathbb{R}^{m\times n_c}$ are designed such that the following two requirements hold (always satisfiable under Assumption 1):
\begin{itemize}\vspace{-0.8em}
  \item[(i)] the matrix
  $
  \mathbf{A} := \begin{bmatrix} A & BC_c \cr B_c C & A_c\end{bmatrix}
  $
  is Hurwitz;
  \item[(ii)] for all $(\Pi_x,\Psi)$ satisfying (\ref{RE-3}), the matrix equations
  \vspace{-0.5em}\[\vspace{-0.5em}\ba{l}
  \Pi_{x_c} S = A_c \Pi_{x_c} + B_c \begin{bmatrix}0\cr Y_m\end{bmatrix}\,,\qquad
  \Psi = C_c\Pi_{x_c}
  \ea
  \vspace{-0.5em}\]
  with $Y_m=C_m\Pi_x + Q_m$, admit a solution $\Pi_{x_c}$.
\end{itemize}

With these matrices $A_c,B_c,C_c$ in hand, following the emulation-based method, it is always possible to design a regulator driven by the \emph{sampled} measurements $\hat y = (\hat e,\hat y_m)$ fulfilling the dynamics (\ref{eq:measurements}), of the form
\vspace{-0.8em}\beeq{\label{eq:regulator-Em1}
\ba{l}
\dot \tau = 1,\quad
\dot x_c = A_c x_c + B_c \hat y ,\quad
\dot { u} = 0\,\\
\qquad \qquad\mbox{for }(\tau, x_c,u)\in[0,T)\times\mathbb{R}^{n_c+m}\,,\\
\tau^+ = 0,\quad
 x_c^+ =  x_c,\quad
{u}^+ = C_c x_c\\
\qquad \qquad\mbox{for }(\tau, x_c,u)\in\{T\}\times\mathbb{R}^{n_c+m}.
\ea}
By setting ${\hat x}_c := e^{-A_c\tau}x_c - \dst\int_0^\tau e^{-A_c\,r}{\rm d} r \,B_c \,\hat y$,  (\ref{eq:regulator-Em1}) can be rewritten into a discrete-time equivalent form
\beeq{\label{eq:regulator-Em2}
\ba{l}
\dot {\hat x}_c = 0 \,,\qquad \dot {u} = 0\\
 \hat x_c^+ =  e^{A_cT}\hat x_c - \int_0^T e^{-A_c\,(r-T)}{\rm d} r B_c \hat y \\
{u}^+ = C_c [e^{A_cT}\hat x_c - \int_0^T e^{-A_c\,(r-T)}{\rm d} r B_c \hat y]\,,
\ea}
which clearly can be implemented by digital devices.

Fundamental to show the properties of the closed-loop system (\ref{ini-sys})-(\ref{eq:regulator-Em2}) is the following lemma, which is adapted from \cite[Lemma 2]{Astolfi2018ECC}.
\begin{lemma}\label{lemma-3.1}
  There exist a symmetric positive definite matrix $\mathbf{P}\in\mathbb{R}^{(n+n_c)\times(n+n_c)}$, $\kappa>0$ and $\gamma^\ast>0$ such that
  \vspace{-1em}\beeq{
  \begin{bmatrix}
  \mathbf{P}\mathbf{A}+\mathbf{A}\mathbf{P} + 2\kappa \mathbf{P} + \frac{2}{\gamma^\ast}\mathbf{C}^\top\mathbf{C} & \mathbf{P}\mathbf{B} \cr
  \mathbf{B}^\top \mathbf{P} & -\gamma^\ast I_q
  \end{bmatrix}\leq 0
  }
  with $\mathbf{B}=\begin{bmatrix}0 & B\cr B_c & 0\end{bmatrix}$ and $\mathbf{C}=\begin{bmatrix}CA & CBC_c \cr C_cB_cC & C_cA_c\end{bmatrix}$.
\end{lemma}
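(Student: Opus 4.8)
The plan is to remove the off-diagonal blocks by a Schur complement, turning the displayed matrix inequality into a single Riccati-type inequality in $\mathbf{P}$, and then to satisfy the latter by exploiting that $\mathbf{A}$ is Hurwitz (requirement (i) in the emulation-based construction). Indeed, the lower-right block $-\gamma^\ast I$ is negative definite for every $\gamma^\ast>0$, so the displayed inequality is equivalent, for any fixed $\gamma^\ast>0$, to
\[
\mathbf{P}\mathbf{A}+\mathbf{A}^\top\mathbf{P} + 2\kappa \mathbf{P} + \frac{2}{\gamma^\ast}\mathbf{C}^\top\mathbf{C} + \frac{1}{\gamma^\ast}\,\mathbf{P}\mathbf{B}\mathbf{B}^\top\mathbf{P} \leq 0 ,
\]
where, as symmetry of the $(1,1)$-block requires, $\mathbf{A}^\top\mathbf{P}$ is read in place of $\mathbf{A}\mathbf{P}$. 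Hence it suffices to exhibit $\mathbf{P}=\mathbf{P}^\top>0$, $\kappa>0$ and $\gamma^\ast>0$ verifying this inequality.

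I would proceed in three steps, in the following order so as to avoid circular dependencies. \emph{(a)} Fix $\kappa>0$ small enough that $\mathbf{A}+\kappa I$ is still Hurwitz; this is possible since $\mathbf{A}$ is Hurwitz, any $\kappa<\min_{\lambda\in\sigma(\mathbf{A})}|\mathrm{Re}\,\lambda|$ working. \emph{(b)} Since $\mathbf{A}+\kappa I$ is Hurwitz, pick any $\mathbf{Q}=\mathbf{Q}^\top>0$ and let $\mathbf{P}=\mathbf{P}^\top>0$ be the unique solution of the Lyapunov equation $\mathbf{P}(\mathbf{A}+\kappa I)+(\mathbf{A}+\kappa I)^\top\mathbf{P}=-2\mathbf{Q}$, i.e. $\mathbf{P}\mathbf{A}+\mathbf{A}^\top\mathbf{P}+2\kappa\mathbf{P}=-2\mathbf{Q}$. \emph{(c)} Substituting this into the Riccati inequality, it remains to check that $\tfrac{2}{\gamma^\ast}\mathbf{C}^\top\mathbf{C}+\tfrac{1}{\gamma^\ast}\mathbf{P}\mathbf{B}\mathbf{B}^\top\mathbf{P}\leq 2\mathbf{Q}$; since the left-hand side is a fixed positive semidefinite matrix scaled by $1/\gamma^\ast$ while $2\mathbf{Q}$ is a fixed positive definite matrix, this holds as soon as $\gamma^\ast$ is large enough, e.g. $\gamma^\ast\geq \lambda_{\max}\!\big(2\mathbf{C}^\top\mathbf{C}+\mathbf{P}\mathbf{B}\mathbf{B}^\top\mathbf{P}\big)/\lambda_{\min}(\mathbf{Q})$. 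Reversing the Schur complement then yields the claimed LMI.

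I do not expect a genuine analytical obstacle here: the construction is entirely explicit, and the only point requiring attention is precisely the ordering of the quantifiers in steps \emph{(a)}--\emph{(c)}, so that $\kappa$ depends only on $\sigma(\mathbf{A})$, $\mathbf{P}$ on $(\kappa,\mathbf{A},\mathbf{Q})$, and $\gamma^\ast$ only on $(\mathbf{P},\mathbf{B},\mathbf{C},\mathbf{Q})$. This is the same mechanism as in \cite[Lemma 2]{Astolfi2018ECC}, from which the statement is adapted; here it is simply specialized to the closed-loop data $(\mathbf{A},\mathbf{B},\mathbf{C})$ produced by the sampled-data regulator, the $\kappa$- and $\gamma^\ast$-margins being what later furnish an exponentially decaying Lyapunov estimate that tolerates the $O(T)$ perturbation caused by sampling.
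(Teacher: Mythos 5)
Your proof is correct. The paper itself gives no proof of this lemma---it simply states that the result is adapted from \cite[Lemma 2]{Astolfi2018ECC}---and your Schur-complement reduction to a Riccati-type inequality, followed by choosing $\kappa$ from the stability margin of the Hurwitz matrix $\mathbf{A}$ (requirement (i) of the emulation-based design), solving the shifted Lyapunov equation for $\mathbf{P}$, and finally taking $\gamma^\ast$ large, is precisely the standard argument behind the cited result; your reading of the $(1,1)$-block as $\mathbf{P}\mathbf{A}+\mathbf{A}^\top\mathbf{P}$ (a typo in the statement) and your ordering of the quantifiers are both right.
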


Let
\beeq{\label{eq:tau_max}
\tau_{\max}(\kappa,\gamma) := \left\{\ba{ll}
\frac{1}{\kappa r}\arctan(r)\,,\qquad  & \mbox{if }\gamma >\kappa\,\\
\frac{1}{\kappa}\,,\qquad & \mbox{if }\gamma =\kappa\,\\
\frac{1}{\kappa r}\mbox{arctanh}(r)\,,\qquad & \mbox{if } \gamma <\kappa\,
\ea\right.
}
where
$
r = \sqrt{|\frac{\gamma^2}{\kappa^2}-1|}\,.
$
\begin{proposition}\label{proposition-1}
   Suppose Assumption \ref{ass-1} holds. Choose $\kappa,\gamma^\ast$ according to Lemma \ref{lemma-3.1}. Then for all $\gamma\geq\gamma^\ast$, and $T\in(0,\tau_{\max}(\kappa,\gamma)]$,
   \vspace{-0.8em}\begin{itemize}
     \item[(i)]the trajectories of the resulting closed-loop system (\ref{ini-sys}) with regulator (\ref{eq:regulator-Em2}) are bounded, and
     \item[(ii)] there exists a $k_{\rm em}>0$, independent of $\gamma,\kappa$ such that
     \vspace{-0.8em}
     \beeq{\label{eq:emu-e}
  \lim_{t+j\rightarrow\infty}\|e(t,j)\| \leq \frac{k_{\rm em}}{\sqrt{\gamma\kappa}}\left\|\begin{bmatrix}Y_mS\cr \Psi S\end{bmatrix}\right\|.
  }
   \end{itemize}
\end{proposition}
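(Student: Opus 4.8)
The plan is to analyze the hybrid closed-loop system obtained from combining the plant~(\ref{ini-sys}), the sampling dynamics~(\ref{eq:measurements}), and the discrete-time regulator~(\ref{eq:regulator-Em2}), and then to perform a change of coordinates that shifts the (continuous-time) steady state to the origin so that the regulation error $e$ can be bounded in terms of the residual sampling-induced mismatch. First I would introduce the steady-state maps: under Assumption~\ref{ass-1} the regulator equations~(\ref{RE-3}) give $(\Pi_x,\Psi)$, and by requirement~(ii) in the construction of $A_c,B_c,C_c$ there is $\Pi_{x_c}$ with $\Pi_{x_c}S = A_c\Pi_{x_c}+B_c\begin{bmatrix}0\\ Y_m\end{bmatrix}$ and $\Psi=C_c\Pi_{x_c}$. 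Define the error variables $\tilde x := x-\Pi_x w$, and for the controller state the natural shifted variable coming from $\hat x_c$ in~(\ref{eq:regulator-Em2}), say $\tilde x_c := \hat x_c - (\text{sampled version of }\Pi_{x_c}w)$, together with the input error $\tilde u := u - \Psi w$. In these coordinates the flow is $\dot{\tilde x}=A\tilde x + B\tilde u$ (exactly, by~(\ref{RE-3})), $\dot{\tilde u}=0$, and the jump map for $(\tilde x_c,\tilde u)$ inherits the discrete dynamics of~(\ref{eq:regulator-Em2}) up to a forcing term driven by the increment $\Pi_{x_c}(w(t_{k+1})-w(t_k))$ and $\Pi_x(w(t_{k+1})-w(t_k))$, i.e., by quantities of size $O(T)\|w\|$ — more precisely proportional to $Y_mSw$ and $\Psi S w$ after using $\dot w = Sw$.

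Next I would set up the Lyapunov/ISS estimate. The key point is that the unforced jump system (setting $w\equiv$const, so no increment) is precisely the sampled-data emulation of the Hurwitz matrix $\mathbf A$; Lemma~\ref{lemma-3.1} provides $\mathbf P$, $\kappa$, $\gamma^\ast$ such that the LMI holds, and the standard small-gain / clock-dependent Lyapunov function construction (as in \cite{Astolfi2018ECC}) certifies that for $T\le\tau_{\max}(\kappa,\gamma)$ the map $V(\xi)=\xi^\top\mathbf P\xi$ (with $\xi=(\tilde x,\tilde x_c)$, augmented with a decreasing clock term across a flow-jump cycle) decreases geometrically along solutions of the unforced hybrid system. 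Here $\tau_{\max}$ in~(\ref{eq:tau_max}) is exactly the horizon over which the Riccati-type differential inequality governing the clock-weighted term stays well-posed, which is why the hypothesis $T\in(0,\tau_{\max}(\kappa,\gamma)]$ enters. Adding back the forcing term of size $O(\tfrac1{\sqrt{\gamma\kappa}})\,\|\begin{bmatrix}Y_mS\\ \Psi S\end{bmatrix}\|\,\|w\|$ — the $1/\sqrt{\gamma\kappa}$ scaling comes from tracking how the gain in Lemma~\ref{lemma-3.1}'s LMI (the $2/\gamma^\ast$ weight and the $2\kappa\mathbf P$ margin) propagates through $T\le\tau_{\max}\sim 1/\kappa$ into the steady-state offset — yields via standard hybrid ISS arguments that the trajectories are bounded (proving~(i)) and that $\limsup$ of $\|(\tilde x,\tilde x_c)\|$ is bounded by a constant times $\tfrac1{\sqrt{\gamma\kappa}}\|\begin{bmatrix}Y_mS\\ \Psi S\end{bmatrix}\|$.

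Finally, since $e = C_e x + Q_e w = C_e\tilde x$ by the second equation of~(\ref{RE-3}), the bound on $\|\tilde x\|$ transfers directly to a bound on $\|e\|$, and absorbing $|\mathcal W|$ and the fixed matrix norms into a constant $k_{\rm em}$ independent of $\gamma,\kappa$ gives~(\ref{eq:emu-e}). I expect the main obstacle to be the careful bookkeeping of the constants through the clock-dependent Lyapunov construction — in particular verifying that the resulting offset genuinely scales like $1/\sqrt{\gamma\kappa}$ rather than, say, $1/\kappa$ or $1/\gamma$, and that $k_{\rm em}$ can indeed be chosen independent of $\gamma$ and $\kappa$; this requires exploiting the precise structure of the LMI in Lemma~\ref{lemma-3.1} (the symmetric appearance of $\gamma^\ast$ in the $(1,1)$ and $(2,2)$ blocks and the coupling with the decay rate $\kappa$) together with the explicit form of $\tau_{\max}$. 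A secondary technical point is handling the hybrid time domain bookkeeping so that ``$\lim_{t+j\to\infty}$'' is meaningful, i.e., ruling out Zeno behaviour (immediate, since jumps are separated by $T$) and combining the inter-sample flow estimate with the across-jump contraction.
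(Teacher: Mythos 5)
Your proposal follows essentially the same route as the paper: shift to the steady-state error coordinates via $(\Pi_x,\Pi_{x_c},\Psi)$, isolate the sampling-induced mismatch variables that reset at jumps, and apply the clock-dependent Lyapunov construction of \cite{Carnevale2007}/\cite{Astolfi2018ECC} with the LMI of Lemma~\ref{lemma-3.1} to get $\dot V\le -2\kappa V+\tfrac{2}{\gamma}\|(Y_mSw,\Psi Sw)\|^2$ and hence the $1/\sqrt{\gamma\kappa}$ bound through $e=C_e\tilde x$. The only detail to fix is your choice $\tilde u:=u-\Psi w$ (for which $\dot{\tilde u}=-\Psi Sw\neq 0$ and the jump does not reset $\tilde u$ to zero): the paper instead uses the hold mismatches $\tilde y:=y-\hat y$ and $\tilde u:=C_cx_c-u$, which vanish at each sample so that the clock-weighted term $\phi(\tau)\|(\tilde y,\tilde u)\|^2$ satisfies $V^+\le V$, with the drift terms $Y_mSw$ and $C_c\Pi_{x_c}Sw=\Psi Sw$ appearing in the flow of $(\tilde y,\tilde u)$ exactly as you anticipated.
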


\vspace{-0.5em}
We observe that the emulation-based regulator (\ref{eq:regulator-Em2}) renders the closed-loop trajectories bounded, provided that the sampling period $T$ is smaller than $\tau_{\max}(\kappa,\gamma)$. That is, this method may not be effective when a large $T$ is desired. On the other hand, from (\ref{eq:emu-e}) it is seen that the regulation error $e(t)$ is asymptotically bounded by a constant, depending on the time-derivative of the desired steady states of $y_m$ and $u$, and on the sampling interval (i.e., $\gamma,\kappa$). This bound can be arbitrarily decreased by increasing $\gamma$, which in turn implies a smaller upper bound for allowable $T$. Practical (and not asymptotic) regulation is essentially due to the fact that the control input is also sampled and thus the continuous-time steady state control input, which is $\Psi w(t)$, is not exactly generated by the controller (\ref{eq:regulator-Em1}). Motivated by these restrictions, we develop a design technique, which can not only solve the problem under a very large $T$, but also render the regulation errors $e(t)$ exponentially convergent to zero.

\section{Robust Regulator Design Using Generalized Hold Devices}
\label{sec-4}
\subsection{Problem Transformation}

\vspace{-1em}
It is well-known (see \cite{Francis1976}) that the steady state input forcing the desired regulation objective of system (\ref{ini-sys}) is a continuous-time signal of the form $u_{ss}(t)=\Psi w(t)$ with $\Psi$ provided by the regulator equations (\ref{RE-3}).
Note that, in general, this continuous-time steady state input cannot be perfectly re-constructed by a discrete-time compensator. This  motivates us to embed into the regulator a continuous-time signal reconstructor, which generates the steady-state  control input during flows. Motivated by \cite{Lawrence2001}, we deal with a cascade of the controlled plant (\ref{ini-sys}) and a \emph{generalized hold device}, the latter described by
\vspace{-0.em}\beeq{\label{zhd}
  \dot \zeta = (\Phi\otimes I_{q_e})\zeta \,,\qquad
  \zeta^+ =  v_\zeta\,
}
with state $\zeta\in\mathbb{R}^{dq_e}$ and input $v_\zeta\in\mathbb{R}^{dq_e}$ used to reset $\zeta$ at the sampling time, and  $\Phi\in \mathbb{R}^{d\times d}$ being a matrix whose  minimal polynomial is coincident with that of $S$, the latter denoted by
$
\mathcal{P}_S(\lambda)=\sum_{i=0}^{d-1}s_i\lambda^i + \lambda^{d}\,.
$
To ease subsequent analysis, there is no loss of generality to let
\vspace{-0.5em}\[
\Phi = \begin{bmatrix}{\bf 0}_{d-1} & I_{d-1} \cr -s_0 & \left(-s_1\, \cdots\, -s_{d-1} \right)\end{bmatrix}\,,
\]\vspace{-1em}
with ${\bf 0}_{d-1}$ being a zero column vector of dimension $d-1$.

The feedback control law is designed as
\vspace{-0.0em}\beeq{\label{u}
   u = L \zeta +  v_u
}
where $v_u\in\mathbb{R}^m$ denotes the residual control input that will be designed as digital, i.e., $\dot v_u=0$, and $L\in\mathbb{R}^{m\times dq_e}$ is such that the pair $(\Phi\otimes I_{q_e}\,,L)$ is observable.

By augmenting (\ref{zhd}) and (\ref{u}) with (\ref{ini-sys}), we  obtain
\beeq{\label{ex-sys-f}
\left\{\ba{rcl}
\dot w &=& S\,w\,\\
\dot { x} &=& A\, x + B\, L\, \zeta + B\,v_u + P\,w \,\\
\dot {\zeta} &=& (\Phi\otimes I_{q_e})\,\zeta
\ea\right.
}
during the flow, and during the jump,
\beeq{\label{ex-sys-j}
 w^+ = w \,,\quad
 x^+ = x \,,\quad
\zeta^+ =  v_\zeta\,
}
with input $v:=\col(v_u,v_{\zeta})$ satisfying $\dot v=0$ during flow.

The problem at hand is thus to design a digital controller of the hybrid system (\ref{ex-sys-f})-(\ref{ex-sys-j}) with control inputs $v$ and measurements $(\hat e,\hat y_m)$ fulfilling the dynamics (\ref{eq:measurements}) such that the resulting closed-loop system trajectories are bounded and $\lim_{t+j\rightarrow\infty}e(t,j)=0$.
By letting
\beeq{\label{eq:def-xD}\ba{l}
w_D(\tau) := e^{-S\tau}w\,\\
x_D(\tau) := e^{-A\tau}x - \int_0^\tau e^{-A\,r}BL e^{(\Phi\otimes I_{q_e})(r-\tau)}{\rm d} r \,\zeta \\ \qquad - \int_0^\tau e^{-A\,r}{\rm d} r \,B \,v_u - \int_0^\tau e^{-A\,r}Pe^{S\,(r-\tau)}{\rm d} r \,w \,\\
\zeta_{D}(\tau) :=e^{-(\Phi\otimes I_{q_e})\tau}\,\zeta\,,
\ea }
it immediately follows that the ``sample-data" discrete-time system linked to (\ref{ex-sys-f})-(\ref{ex-sys-j}) is given by
\beeq{\ba{l}\label{E-DTS}
\dot w_D=0\,,\quad \dot x_{D} =0 \,,\quad \dot \zeta_{D} =0\,\\
\left\{
\ba{rcl}
w_D^+ &=& S_D \, w_D\,\\
x_{D}^+ &=& A_{D}\, x_{D} + L_{D}\,\zeta_{D} + B_{D}\, v_{u} + P_D w_D\,\\
\zeta_{D}^+ &=&  v_{\zeta}\,\\
\ea\right.\\
\hat y=\begin{bmatrix}\hat e \cr \hat y_{m}\end{bmatrix} = \begin{bmatrix} C_e x_D + Q_e w_D \cr C_m x_D + Q_m w_D \end{bmatrix}
\ea}
with $S_D := e^{S\,T}$, $A_{D} := e^{A\,T}$, $B_{D}:=\int_0^Te^{A\,r}{\rm d} r\, B$, $L_{D}: =\int_0^Te^{A\,(T-r)}BL e^{(\Phi\otimes I_{q_e})r}{\rm d} r$,  and $P_{D}:=\int_0^Te^{A\,(T-r)}\cdot$ $\cdot Pe^{S\,r}{\rm d} r$.
For system (\ref{E-DTS}) the following holds.
\begin{lemma}
Suppose Assumptions \ref{ass-1} and \ref{ass-2} hold.
\vspace{-1em}\begin{itemize}
  \item[(i)] The system (\ref{E-DTS}) is stabilizable and detectable w.r.t. inputs $v$ and outputs $\hat y$ when $w_D=0$.
  \item[(ii)] Let $\Phi_D:=e^{\Phi\,T}$. For  any $\Pi_x$ in (\ref{RE-3}), there exists $\Pi_\zeta\in\mathbb{R}^{dq_e\times d}$ such that
\beeq{\label{REs-2}
\ba{rcl}
\Pi_x S_D &=& A_D\Pi_x + L_{D}\,\Pi_\zeta  + P_D\,\\
\Pi_\zeta S_D &=& (\Phi_D\otimes I_{q_e})\Pi_\zeta\,\\
0 &=& C_e \Pi_x + Q_e\,.
\ea
}
\end{itemize}
\vspace{-1em}
\end{lemma}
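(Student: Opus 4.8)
The plan is to handle item~(i) by a direct PBH/sampled-data argument and item~(ii) by reducing it to a ``generalized reconstructor'' property of the hold device, which I expect to be the only delicate point. \emph{Item (i).} With $w_D=0$, (\ref{E-DTS}) is the discrete-time system with state $\col(x_D,\zeta_D)$, input $v=\col(v_u,v_\zeta)$, dynamics matrix $\mathbf A_D=\begin{bmatrix}A_D & L_D\cr 0 & 0\end{bmatrix}$, input matrix $\mathbf B_D=\begin{bmatrix}B_D & 0\cr 0 & I_{dq_e}\end{bmatrix}$, and output matrix $\mathbf C_D=[\,C\ \ 0\,]$ with $C=\col(C_e,C_m)$ as in (\ref{ini-sys}). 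I would run the PBH test: for any $\lambda\in\Compl$ with $|\lambda|\ge1$ (hence $\lambda\neq0$) the block $\lambda I_{dq_e}$ appearing in $\lambda I-\mathbf A_D$ is invertible, so elementary column operations give
\[
\rank[\lambda I-\mathbf A_D\ \ \mathbf B_D]=dq_e+\rank[\lambda I_n-A_D\ \ L_D\ \ B_D],
\]
and elementary row operations give
\[
\rank\col(\lambda I-\mathbf A_D,\ \mathbf C_D)=dq_e+\rank\col(\lambda I_n-A_D,\ C).
\]
Since appending the columns of $L_D$ cannot lower the rank, it remains to show that $(A_D,B_D)$ is stabilizable and $(A_D,C)$ is detectable; this is standard under the non-pathological sampling condition (\ref{eq:sample-T}) — which for the present purpose involves only $\sigma(A)$ — by inheritance of stabilizability/detectability under sampling, exactly the property for which Assumption~\ref{ass-2} was introduced following \cite{Castillo1997,Lawrence2001} (put $(A,B)$ in stabilizability form; the stable uncontrollable part is mapped by $r\mapsto e^{rT}$ into the open unit disk, and (\ref{eq:sample-T}) keeps the controllable part controllable after sampling; dually for $(A_D,C)$).

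\emph{Item (ii).} The third equation in (\ref{REs-2}) is precisely the second line of (\ref{RE-3}), so only the first two need attention. The key step is a reduction to the continuous time: since $\Phi_D\otimes I_{q_e}=e^{(\Phi\otimes I_{q_e})T}$ and $S_D=e^{ST}$, any $\Pi_\zeta$ solving the Sylvester relation $\Pi_\zeta S=(\Phi\otimes I_{q_e})\Pi_\zeta$ automatically satisfies the second equation in (\ref{REs-2}) (the converse also holds under Assumption~\ref{ass-2}, which controls $\sigma(S)=\sigma(\Phi)$). So I would construct $\Pi_\zeta$ with $\Pi_\zeta S=(\Phi\otimes I_{q_e})\Pi_\zeta$; then $e^{(\Phi\otimes I_{q_e})r}\Pi_\zeta=\Pi_\zeta e^{Sr}$ lets me pull $\Pi_\zeta$ out of the integral defining $L_D$, and, substituting $B\Psi+P=\Pi_x S-A\Pi_x$ from (\ref{RE-3}) and using the integration-by-parts identity $\int_0^T e^{A(T-r)}(\Pi_x S-A\Pi_x)e^{Sr}\,{\rm d}r=\Pi_x S_D-e^{AT}\Pi_x$, a short computation yields
\[
A_D\Pi_x+L_D\Pi_\zeta+P_D=\Pi_x S_D+\int_0^T e^{A(T-r)}\bigl(BL\Pi_\zeta-B\Psi\bigr)e^{Sr}\,{\rm d}r
\]
for any $\Psi$ with $(\Pi_x,\Psi)$ solving (\ref{RE-3}). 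Hence the first equation in (\ref{REs-2}) holds as soon as $BL\Pi_\zeta=B\Psi$, so it is enough to produce $\Pi_\zeta\in\Real^{dq_e\times d}$ with $\Pi_\zeta S=(\Phi\otimes I_{q_e})\Pi_\zeta$ and $L\Pi_\zeta=\Psi$.

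Producing such a $\Pi_\zeta$ — equivalently, showing that the generalized hold (\ref{zhd})--(\ref{u}) can reconstruct the steady-state input $\Psi w(t)$ during flows — is the step I expect to be the main obstacle. I would recast the two requirements as $L(\Phi\otimes I_{q_e})^k\Pi_\zeta=\Psi S^k$ for $k\ge0$; since the minimal polynomial of $\Phi\otimes I_{q_e}$ equals $\mathcal P_S$ (of degree $d$), only $k=0,\dots,d-1$ are needed, i.e.\ the columns of $\col(\Psi,\Psi S,\dots,\Psi S^{\,d-1})$ must lie in the column space of the observability matrix of $(\Phi\otimes I_{q_e},L)$. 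As $S$ is neutrally stable, hence semisimple, I would check this one eigenvalue at a time (over $\Compl$ if necessary): for $\lambda\in\sigma(S)=\sigma(\Phi)$ with eigenvector $\alpha$, observability of $(\Phi\otimes I_{q_e},L)$ makes $L$ injective on the $\lambda$-eigenspace $\spn\{\phi_\lambda\}\otimes\Real^{q_e}$ of $\Phi\otimes I_{q_e}$; one then picks $e_\lambda\in\Real^{q_e}$ with $L(\phi_\lambda\otimes e_\lambda)=\Psi\alpha$ and sets $\Pi_\zeta\alpha=\phi_\lambda\otimes e_\lambda$, which enforces the Sylvester relation along $\alpha$ together with $L\Pi_\zeta\alpha=\Psi\alpha$; collecting these blockwise gives $\Pi_\zeta$. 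The one nontrivial point is the solvability of $L(\phi_\lambda\otimes e_\lambda)=\Psi\alpha$ — that $\Psi\alpha$, which is fixed by $\Pi_x$ through (\ref{RE-3}), lies in the range of $L(\phi_\lambda\otimes\cdot)$ — which is exactly where the tensored structure $\Phi\otimes I_{q_e}$ of the hold device and the choice of $L$ enter, in the spirit of the generalized-reconstructor constructions of \cite{Lawrence2001,Marconi2013}.
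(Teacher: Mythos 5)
Part (i) of your proposal is correct and coincides with the paper's argument: the PBH test reduces the claim to stabilizability of $(A_D,B_D)$ and detectability of $(A_D,C)$, which Assumptions \ref{ass-1}.(i) and \ref{ass-2} deliver through the sampling-preservation result of \cite{Kimura1990}. Part (ii) also follows the paper's route: the paper likewise reduces (\ref{REs-2}) to the continuous-time equations (\ref{RE-3}) together with $\Pi_\zeta S=(\Phi\otimes I_{q_e})\Pi_\zeta$, $\Psi=L\Pi_\zeta$ (its equation (\ref{RE-4})), and your integration-by-parts computation is exactly the content of the paper's phrase that (\ref{REs-2}) is the discretized form of (\ref{RE-3}) and (\ref{RE-4}).

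The gap is precisely the step you defer: you never prove that a $\Pi_\zeta$ satisfying $\Pi_\zeta S=(\Phi\otimes I_{q_e})\Pi_\zeta$ and $L\Pi_\zeta=\Psi$ exists, ending instead with the unresolved solvability of $L(\phi_\lambda\otimes e_\lambda)=\Psi\alpha$. The missing observation --- which is the entire content of the paper's one-line appeal to observability --- is the one you already set up: on the solution space of the Sylvester equation one has $L(\Phi\otimes I_{q_e})^k\Pi_\zeta=L\Pi_\zeta S^k$, so the two requirements are equivalent to $\mathcal O\,\Pi_\zeta=\col(\Psi,\Psi S,\dots,\Psi S^{d-1})$, where $\mathcal O:=\col(L,\,L(\Phi\otimes I_{q_e}),\dots,L(\Phi\otimes I_{q_e})^{d-1})\in\Real^{md\times dq_e}$ has full column rank $dq_e$ by observability (powers beyond $d-1$ are redundant because $\mathcal P_S$ annihilates $\Phi\otimes I_{q_e}$). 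When $m=q_e$ this matrix is square, hence invertible, and $\Pi_\zeta:=\mathcal O^{-1}\col(\Psi,\Psi S,\dots,\Psi S^{d-1})$ is the unique solution (that it also satisfies the Sylvester relation follows by applying $\mathcal O$ to $\Pi_\zeta S-(\Phi\otimes I_{q_e})\Pi_\zeta$ and using $\mathcal P_S(S)=\mathcal P_S(\Phi\otimes I_{q_e})=0$); equivalently, in your eigenvector construction, $e\mapsto L(\phi_\lambda\otimes e)$ is an injective map between spaces of equal dimension $q_e=m$, hence onto, so $e_\lambda$ always exists. Be aware that this dimension count is tight: for $m>q_e$ the range of $\Pi_\zeta\mapsto L\Pi_\zeta$ on the Sylvester solution space is a proper subspace of $\Real^{m\times d}$ and existence fails for generic $\Psi$ (take $d=1$, $S=0$, $q_e=1$, $m=2$: then $L\Pi_\zeta=\Psi$ with scalar $\Pi_\zeta$ forces $\Psi$ to be a multiple of $L$). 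So your suspicion about this step is well founded --- the claim, and the paper's own justification of it, really rest on the square case $m=q_e$ (made explicit only in Lemma \ref{coro-1}) or on a compatibility condition on $\Psi$.
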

\begin{proof}
\emph{Proof of (i).}With Assumption \ref{ass-1}.(i) and \ref{ass-2}, according to \cite{Kimura1990}, it can be deduced that $(A_D,B_D,C)$ is stabilizable and detectable. Then using the PBH test, it can be  verified that system (\ref{E-DTS}) is stabilizable and detectable w.r.t. inputs $v$ and outputs $\hat y$ when $w_D=0$.

\emph{Proof of (ii).}
As for the solution of (\ref{REs-2}), we observe that, for any $\Psi\in\mathbb{R}^{m\times d}$, since $(\Phi\otimes I_{q_e},L)$ is observable, there always exists a unique solution $\Pi_\zeta$ such that
\beeq{\label{RE-4}
\ba{rcl}
\Pi_\zeta S &=& (\Phi\otimes I_{q_e}) \Pi_\zeta\,\\
\Psi &=& L \Pi_\zeta\,.
\ea}
In view of the fact that (\ref{REs-2}) is  the discretized form of  (\ref{RE-3}) (derived by Assumption \ref{ass-1}.(ii)) and (\ref{RE-4}), this indicates that such $(\Pi_x,\Pi_\zeta)$ is also the solution of (\ref{REs-2}), completing the proof. $\blacksquare$
\end{proof}

The desired robust regulator can be completed by designing an output feedback controller for the discrete-time system (\ref{E-DTS}), having the form
\beeq{\label{DR}\ba{l}
\dot z = 0\,,\quad
z^+ = A_z\, z + B_z\, \hat y \,,\quad z_k\in\mathbb{R}^{n_z}\,,\\
v = K_z \,z + L_z\, \hat y\,.
\ea
}

\begin{theorem}\label{theo-1}
Suppose Assumptions \ref{ass-1} and \ref{ass-2} hold. The robust sampled output regulation problem of system (\ref{ini-sys}) is solved by the regulator (\ref{zhd}), (\ref{u}), and (\ref{DR}) \emph{if}
\begin{itemize}
  \item[(a)] the origin of the closed-loop discrete-time system (\ref{E-DTS}), (\ref{DR}) with $w_D=0$ is globally exponentially stable;
  \item[(b)] for any $\Pi_x,\Pi_\zeta$ satisfying (\ref{REs-2}), letting $Y_m=C_m\Pi_x+Q_m$, there exists a solution $\Pi_z\in\mathbb{R}^{n_z\times d}$ of
\beeq{\label{RE-z}\ba{rcl}
\Pi_z S_D &=& A_z\, \Pi_z + B_z\, \begin{bmatrix}{\bf 0}_{d\times q_e} & Y_m^{\top}\end{bmatrix}^{\top}\,\\
\begin{bmatrix}{\bf 0}_{m\times d}\cr \Pi_\zeta S_D\end{bmatrix} &=& K_z \,\Pi_z + L_z\, \begin{bmatrix}{\bf 0}_{d\times q_d} & Y_m^{\top}\end{bmatrix}^{\top}\,.
\ea}
\end{itemize}
\end{theorem}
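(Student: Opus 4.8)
The plan is to show that conditions (a) and (b) together imply two things: boundedness of all closed-loop trajectories, and exponential convergence of $e(t,j)$ to zero. The natural route is the standard internal-model argument adapted to the hybrid setting, carried out on the "sampled-data" discrete-time system (\ref{E-DTS}) driven by the controller (\ref{DR}), and then lifted back to the continuous-time flow.

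\medskip\noindent\textbf{Step 1: Steady-state subspace and error coordinates.} First I would collect the regulator equations. The plant-side equations (\ref{REs-2}) give $(\Pi_x,\Pi_\zeta)$, and the controller-side equations (\ref{RE-z}) give $\Pi_z$. Define the composite map $\Pi:=\col(\Pi_x,\Pi_\zeta,\Pi_z)$ acting on the exosystem state $w_D$, and introduce the error variables $\tilde x:=x_D-\Pi_x w_D$, $\tilde\zeta:=\zeta_D-\Pi_\zeta w_D$, $\tilde z:=z-\Pi_z w_D$. The key computation is to verify that, in these coordinates, the $w_D$-dependence cancels out: using (\ref{E-DTS}), (\ref{DR}), (\ref{REs-2}), (\ref{RE-z}) together with $Y_m w_D = C_m\Pi_x w_D + Q_m w_D = \hat y_m - C_m\tilde x$ and $\hat e = C_e\tilde x$ (since $C_e\Pi_x+Q_e=0$), the error system becomes a copy of (\ref{E-DTS}),(\ref{DR}) with $w_D=0$ and state $\col(\tilde x,\tilde\zeta,\tilde z)$. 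This is the crucial algebraic step and the one I expect to require the most care, because one must check that the affine "disturbance" terms generated by substituting $x_D=\tilde x+\Pi_x w_D$ etc. line up exactly with the two matrix equations in (\ref{RE-z}); the block $\col({\bf 0}_{m\times d},\Pi_\zeta S_D)$ on the left of the second equation in (\ref{RE-z}) is precisely what is needed so that $v$ reproduces $\col(v_u,v_\zeta)$ along the steady state.

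\medskip\noindent\textbf{Step 2: Exponential stability of the error system.} By condition (a), the origin of (\ref{E-DTS}),(\ref{DR}) with $w_D=0$ is globally exponentially stable; since Step 1 shows the error system is exactly this system, $\col(\tilde x,\tilde\zeta,\tilde z)$ converges to zero exponentially (in the discrete index $j$). Since $w_D$ evolves under $S_D=e^{ST}$ which is neutrally stable, $w_D(j)$ stays bounded, hence $x_D,\zeta_D,z$ are bounded and, transforming back via (\ref{eq:def-xD}), the continuous-time flow trajectories $x(t,j),\zeta(t,j)$ are bounded as well (the flow over one interval is a bounded linear map of the sampled values plus a bounded forcing from $w$). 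In particular $e(t,j)=C_e x(t,j)+Q_e w(t,j)$ is bounded.

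\medskip\noindent\textbf{Step 3: Convergence of the continuous-time error.} Finally I would show $e(t,j)\to 0$. At sampling instants, $\hat e=C_e\tilde x\to 0$. For the continuous-time error between samples, note that during flow $x(t)$ and $\zeta(t)$ satisfy (\ref{ex-sys-f}); subtracting the steady-state flow $\Pi_x w(t)$, $\Pi_\zeta w(t)$ (which, by construction of $\Phi$ and $L$ and equations (\ref{RE-4}), satisfies exactly the plant flow dynamics with $u_{ss}=\Psi w = L\Pi_\zeta w$), the continuous-time error coordinates $x(t)-\Pi_x w(t)$, $\zeta(t)-\Pi_\zeta w(t)$ obey a linear flow ODE with zero forcing whose initial condition at $t_k$ is a linear image of $\col(\tilde x(j),\tilde\zeta(j))$, hence tends to zero. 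Since $e = C_e(x-\Pi_x w)$ because $C_e\Pi_x+Q_e=0$, we get $e(t,j)\to 0$ uniformly over the flow interval, which gives the claimed asymptotic (indeed exponential) regulation. Robustness with respect to small perturbations of the matrices of (\ref{ini-sys}) other than $S$ follows in the usual internal-model fashion: the regulator equations (\ref{REs-2})--(\ref{RE-z}) remain solvable and condition (a) is an open property, so the argument persists; this I would state rather than belabor.
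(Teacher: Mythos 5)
Your proposal is correct and follows essentially the same route as the paper: the error-coordinate cancellation in your Step 1 is precisely the verification that $\col(\Pi_x,\Pi_\zeta,\Pi_z)$ solves the closed-loop discrete-time Sylvester equation, and your Step 3 reproduces the paper's computation of the hybrid invariant manifold $\widehat\Pi_{cl}(\tau)$ on which $e$ vanishes. The only imprecision is the claim of ``zero forcing'' during flow: the $x$-error flow is actually forced by $B\tilde v_u$ with $\tilde v_u=K_{z,u}\tilde z+L_{z,u}C\tilde x$ held constant over each sampling interval, but since this is a linear image of the exponentially decaying discrete-time error your conclusion stands unchanged.
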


In the following lemma, we claim that if the number of  control inputs matches the one of regulation errors  then the above sufficient conditions are also necessary.

\begin{lemma}\label{coro-1}
Suppose Assumptions \ref{ass-1} and \ref{ass-2} hold. The sampled robust output regulation problem of system (\ref{ini-sys}) with $m=q_e$ is solved by the regulator (\ref{zhd}), (\ref{u}), and (\ref{DR}) \emph{if and only if} the requirements (a) and (b) in Theorem \ref{theo-1} are fulfilled.
\end{lemma}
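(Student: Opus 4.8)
\textbf{Proof plan for Lemma \ref{coro-1}.}

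The ``if'' direction is already established by Theorem \ref{theo-1}, so the entire burden is the ``only if'' direction: assuming the regulator (\ref{zhd}), (\ref{u}), (\ref{DR}) solves the sampled robust output regulation problem for a system with $m=q_e$, I must recover conditions (a) and (b). The plan is to work with the closed-loop discrete-time system obtained by interconnecting (\ref{E-DTS}) with (\ref{DR}), in the augmented coordinates $\chi_D := \col(x_D,\zeta_D,z)$ together with $w_D$, and to exploit robustness in the standard Francis--Byrnes--Isidori style: because regulation must persist under small perturbations of all matrices of (\ref{ini-sys}) except $S$, and since $S_D=e^{ST}$ inherits the neutrally stable eigenstructure of $S$ (no pathological sampling by Assumption \ref{ass-2}), the error output $\hat e$ being forced to zero in steady state for a whole neighborhood of parameter values forces the existence of a well-defined steady-state subspace on which $\hat e\equiv 0$.

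First I would argue that condition (a) is necessary. The internal model structure is not used for stabilization of the $w_D$-free subsystem, so the matrices $(A_z,B_z,K_z,L_z)$ must render the origin of (\ref{E-DTS})--(\ref{DR}) with $w_D=0$ at least exponentially stable: if it were not, either the closed loop would fail to be bounded for some initial condition in the zero-exosignal case, or some perturbation of the plant matrices (which generically destroys any non-hyperbolic or center behaviour) would destabilize it, contradicting robust regulation. Here I would lean on the fact, already recorded in Lemma~2(i), that (\ref{E-DTS}) with $w_D=0$ is stabilizable and detectable, so that ``regulation robust to small parameter variations'' is equivalent to ``exponential stability of the $w_D=0$ closed loop'' — this is the discrete-time counterpart of the classical equivalence in \cite{Francis1976}, and the $m=q_e$ hypothesis guarantees the dimensions match so that no freedom is wasted.

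Second, for condition (b), I would use the hybrid/discrete internal model principle. Since the closed loop is exponentially stable off the exosignal, for each admissible parameter choice there is a unique invariant manifold $\chi_D = \Pi\, w_D$ (a discrete Sylvester equation $\Pi S_D = \mathbf{A}_{cl}\Pi + \mathbf{B}_{cl}(\cdots)$), and asymptotic regulation $e(t,j)\to 0$ — equivalently $\hat e\to 0$ at sampling instants, which by the flow reconstruction (\ref{eq:def-xD}) and the observability of $(\Phi\otimes I_{q_e},L)$ propagates to $e\equiv 0$ on the whole hybrid manifold — says exactly that $C_e$ times the $x_D$-block of $\Pi$ plus $Q_e$ vanishes. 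Partitioning $\Pi$ into its $x_D$, $\zeta_D$ and $z$ components as $(\Pi_x,\Pi_\zeta,\Pi_z)$, the first two rows of this Sylvester equation are precisely (\ref{REs-2}) (which Lemma~2(ii) already guarantees is solvable), while the $z$-row together with the output-equation row of (\ref{DR}) evaluated on the manifold gives exactly the two identities in (\ref{RE-z}), with $\col(\mathbf 0_{m\times d},\Pi_\zeta S_D)$ appearing because $v=\col(v_u,v_\zeta)$ feeds $\dot v=0$ during flow and $v_\zeta$ resets $\zeta$ — so the steady-state value of $v_\zeta$ must equal the jump value of $\zeta_D$, namely $\Pi_\zeta S_D w_D$, and the steady-state $v_u$ contributes nothing to the $\zeta$-dynamics, yielding the zero block. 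The key point enabling ``for any $\Pi_x,\Pi_\zeta$'' rather than ``for the nominal ones'' is again robustness: as the plant matrices vary, $(P,Q_e,C_m,Q_m)$ vary, hence $\Pi_x$, $Y_m$ and the associated $\Pi_\zeta$ sweep out the full solution set of (\ref{REs-2}), and for each the regulator must still work, forcing (\ref{RE-z}) to be solvable for every such pair.

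The main obstacle I anticipate is the careful bookkeeping at the hybrid-to-discrete interface: making rigorous that ``continuous-time $e(t,j)\to 0$'' is equivalent to ``$\hat e\to 0$ at jumps'' \emph{and} that this forces the full manifold relation $C_e\Pi_x + Q_e = 0$ rather than merely a relation holding pointwise along one trajectory. This requires invoking the linearity and time-invariance of the flow map in (\ref{eq:def-xD}), the observability of $(\Phi\otimes I_{q_e},L)$ to rule out ``hidden'' nonzero steady-state error that averages to zero at sample times, and a density/genericity argument (as in the remark following Assumption \ref{ass-2}) to ensure the perturbed parameter family genuinely generates all of the solution set of (\ref{REs-2}). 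Once these interface issues are settled, the algebra reducing the closed-loop Sylvester equation to (\ref{REs-2}) and (\ref{RE-z}) is routine.
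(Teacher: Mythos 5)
Your overall architecture matches the paper's: the ``if'' part is delegated to Theorem \ref{theo-1}, and the ``only if'' part goes through the attractive invariant manifold of the closed loop (a Sylvester-type equation) plus robustness with respect to $P,Q$ to sweep out all solutions of (\ref{REs-2}). However, there is a genuine gap in your derivation of requirement (b), precisely at the point you flag as ``the main obstacle'': the zero block ${\bf 0}_{m\times d}=K_{z,u}\Pi_z+L_{z,u}\col({\bf 0},Y_m)$ in (\ref{RE-z}). Your justification --- that ``the steady-state $v_u$ contributes nothing to the $\zeta$-dynamics'' --- is not what that identity says; it says the steady state of $v_u$ itself is zero. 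A purely discrete-time Sylvester argument cannot produce this: a nonzero steady-state coefficient $\Psi_{v_u}$ for $v_u$ is perfectly compatible with $\hat e\equiv 0$ at the sampling instants (it merely shifts the pair $(\Pi_x,\Pi_\zeta)$ to a solution of a modified equation containing an extra term $B_D\Psi_{v_u}$), so it cannot be excluded without using the inter-sample behaviour. The tools you propose for the interface (observability of $(\Phi\otimes I_{q_e},L)$ and a genericity argument) do not close this hole.

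What actually closes it in the paper is the hypothesis $m=q_e$, which you instead invoke only in the argument for (a), where it plays no role. The paper parametrizes the attractor by the flow time, $\widehat\Pi_{cl}(\tau)=\col(\Pi_x(\tau),\Pi_\zeta(\tau),\ldots)$, and observes that continuous-time regulation forces $C_e\Pi_x(\tau)+Q_e=0$ for \emph{every} $\tau\in[0,T)$. Coupled with the flow equation $\dot\Pi_x(\tau)=-\Pi_x(\tau)S+A\Pi_x(\tau)+B\Psi(\tau)+P$ and the periodic boundary condition, this is a copy of the continuous-time regulator equations (\ref{RE-3}); under the non-resonance condition with $m=q_e$ these have a \emph{unique} solution, which forces $\Pi_x(\tau)$ and $\Psi(\tau)$ to be the constant pair $(\Pi_x,\Psi)$, forces $\Pi_\zeta(\tau)$ constant with $\Psi=L\Pi_\zeta$, and hence forces $\Psi_{v_u}=\left[\Psi-\Psi_\zeta\right]e^{S\tau}=0$. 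That uniqueness step is the idea your proposal is missing; without it the ``only if'' direction does not go through. (Your treatment of (a) is somewhat more elaborate than the paper's one-line dismissal and would need care --- marginal stability must be excluded by a perturbation argument --- but that is a secondary issue.)
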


The proofs of Theorem \ref{theo-1} and Lemma \ref{coro-1} are respectively given in Appendices \ref{proof-theo-1} and \ref{proof-coro-1}.

\subsection{About the Design of the Controller (\ref{DR})}

In the previous subsection, we have shown that the original robust output regulation problem with sampled measurements can be transformed into the design of a discrete-time output feedback controller (\ref{DR}) such that the requirements (a) and (b) in Theorem \ref{theo-1} are fulfilled.

Observe that by partitioning $K_z$ and $L_z$ as $\begin{bmatrix}K_{z,u} \cr K_{z,\zeta}\end{bmatrix}$ and $\begin{bmatrix}L_{z,u} \cr L_{z,\zeta}\end{bmatrix}$ consistently with the partition of $v$ in $v_u$ and $v_{\zeta}$, the second equation of (\ref{RE-z}) can be partitioned into two parts
\begin{eqnarray}
{\bf 0}_{m\times d} &=& K_{z,u} \Pi_z + L_{z,u} \begin{bmatrix}{\bf 0}_{d\times q_e} & Y_m^{\top} \end{bmatrix}^{\top}\, \label{eq:0}\\
\Pi_\zeta S_D &=& K_{z,\zeta} \,\Pi_z + L_{z,\zeta}\,  \begin{bmatrix}{\bf 0}_{d\times q_e} & Y_m^{\top}\end{bmatrix}^{\top}\label{eq:Pi_zeta}\,.
\end{eqnarray}
From (\ref{eq:0}), one can see that the controller (\ref{DR}) is required to block the steady state of the extra measurements $\hat y_{m}$, denoted by $Y_mw_D$. In addition, (\ref{eq:Pi_zeta}) expresses the ability  of  the controller (\ref{DR})  to reproduce the ideal steady state of input $v_{\zeta}$, which is $\Pi_{\zeta}S_Dw_D$.

In the following, we show how to systematically design the controller (\ref{DR}) so that the conditions of Theorem \ref{theo-1} are fulfilled.
Motivated by \cite{Wang2020}, the fulfillment of (\ref{eq:0}) suggests the design of a washout digital filter to $\hat y_{m}$ so as to block its steady state. The digital filter takes the form
\beeq{\label{filter}\ba{l}
\dot \xi = 0 \,,\quad
\xi^+ = F_{\rm f}\, \xi + G_{\rm f}\, \hat y_{m}\,\\
y_{\rm f} = \hat y_{m} - \Gamma_{\rm f}\, \xi
\ea}
where $y_{\rm f}$ is the filter output,  $(F_{\rm f},\Gamma_{\rm f}) \in \mathbb{R}^{dq_m\times dq_m} \times \mathbb{R}^{q_m\times dq_m}$ is an observable pair with all eigenvalues of $F_{\rm f}$ lying strictly within the unit circle, and $G_{\rm f}\in\mathbb{R}^{dq_m\times q_m}$ is such that $\Phi_D\otimes I_{q_m}=F_{\rm f}+G_{\rm f} \Gamma_{\rm f}$.

Similarly, the fulfillment of (\ref{eq:Pi_zeta}) suggests to consider a discrete-time internal model of the form
\beeq{\ba{l}\label{IMPre}
\dot \eta = 0\,,\quad
\eta^+ = (\Phi_D\otimes I_{q_e})\eta + v_\eta\,\\
v_{\zeta} = \eta + \bar v_{\zeta}
\ea}
with $\eta\in\mathbb{R}^{dq_e}$ and $\bar v_{\zeta}, v_\eta\in \mathbb{R}^{dq_e}$ to be determined later.

\begin{lemma}\label{lemma-2}
Suppose Assumptions \ref{ass-1} and \ref{ass-2} hold. The augmented discrete-time system (\ref{E-DTS}), (\ref{filter}), and (\ref{IMPre}) is stabilizable and detectable  w.r.t. inputs $\bar v := (v_{u},\bar v_{\zeta},v_{\eta})$ and outputs $\mathrm{y}:=\col(\hat e, y_{\rm f})$, when $w_D=0$.
\end{lemma}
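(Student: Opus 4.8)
The plan is to verify stabilizability and detectability via the PBH test applied to the cascade structure, exploiting the triangular form in which the washout filter~(\ref{filter}) and the internal model~(\ref{IMPre}) are appended to the plant~(\ref{E-DTS}). First I would write down the state of the augmented system (with $w_D=0$) as $\chi := \col(x_D,\zeta_D,\xi,\eta)$, assemble its transition matrix
\[
\mathbf{A}_{\rm aug} = \begin{bmatrix}
A_D & L_D & 0 & 0 \\
0 & 0 & 0 & 0 \\
G_{\rm f} C_m & 0 & F_{\rm f} & 0 \\
0 & 0 & 0 & \Phi_D\otimes I_{q_e}
\end{bmatrix},
\]
together with the input matrix $\mathbf{B}_{\rm aug}$ (columns from $B_D v_u$, $\bar v_\zeta$ entering the $\zeta_D$ block, and $v_\eta$ entering the $\eta$ block) and the output matrix $\mathbf{C}_{\rm aug}$ corresponding to $\mathrm{y}=\col(\hat e, y_{\rm f})$, where $\hat e = C_e x_D$ and $y_{\rm f} = C_m x_D - \Gamma_{\rm f}\xi$. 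Then I would run the two PBH conditions separately.

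\emph{Detectability.} Take $\lambda$ with $|\lambda|\geq 1$ and a vector $v=\col(v_x,v_\zeta,v_\xi,v_\eta)$ in the kernel of $\begin{bmatrix}\mathbf{A}_{\rm aug}-\lambda I\\ \mathbf{C}_{\rm aug}\end{bmatrix}$. The $\eta$-row of $(\mathbf{A}_{\rm aug}-\lambda I)v=0$ gives $((\Phi_D\otimes I_{q_e})-\lambda I)v_\eta=0$; since $\sigma(\Phi_D)=\sigma(e^{\Phi T})=\{e^{\lambda_i T}:\lambda_i\in\sigma(S)\}$ lies on the unit circle (here Assumption~\ref{ass-2} guarantees these eigenvalues are distinct images, so $\Phi_D$ has the same minimal polynomial as $S_D$) while the $\zeta_D$-row forces $v_\zeta=0$ (the block is $0-\lambda I$, invertible), and then the $x_D$-row and $\hat e$-output reduce to the detectability of $(A_D,L_D v_\zeta + \dots , C_e)$... more carefully: with $v_\zeta=0$ the $x_D$-row is $(A_D-\lambda I)v_x=0$ and the $\xi$-row is $G_{\rm f}C_m v_x + (F_{\rm f}-\lambda I)v_\xi=0$, while the outputs give $C_e v_x=0$ and $C_m v_x-\Gamma_{\rm f}v_\xi=0$. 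I would argue $v_x=0$ from detectability of $(A_D,C)$ — i.e. of the pair $(A_D,\col(C_e,C_m))$, which is Lemma~1(i) — once I show the $\xi$-equations force $C_m v_x$ into the "observable" behaviour: substituting $\Gamma_{\rm f}v_\xi=C_m v_x$ into $G_{\rm f}C_m v_x + (F_{\rm f}-\lambda I)v_\xi=0$ and using $F_{\rm f}+G_{\rm f}\Gamma_{\rm f}=\Phi_D\otimes I_{q_m}$ yields $((\Phi_D\otimes I_{q_m})-\lambda I)v_\xi = 0$ together with $C_m v_x = \Gamma_{\rm f} v_\xi$; since $|\lambda|\ge 1$ either $v_\xi=0$ (when $\lambda\notin\sigma(\Phi_D)$) giving $C_m v_x=0$ hence $C v_x=0$ hence $v_x=0$ by Lemma~1(i), or $\lambda\in\sigma(\Phi_D)$ in which case I must also use that the output $y_{\rm f}$ together with the filter being a genuine washout (observability of $(F_{\rm f},\Gamma_{\rm f})$) kills that mode — this is the delicate bookkeeping step. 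Finally $v_\eta=0$ needs the internal-model mode at $\lambda\in\sigma(\Phi_D)$ to be observable through the chain; but note $v_\eta$ does not appear in $\mathbf C_{\rm aug}$ nor drive $x_D,\xi$, so detectability of the $\eta$-block is \emph{not} automatic from the outputs — instead I would recall that the lemma only requires detectability when $w_D=0$ of the \emph{full} augmented system, and the $\eta$-dynamics $\eta^+=(\Phi_D\otimes I_{q_e})\eta+v_\eta$ is reachable from $v_\eta$, so the unobservable modes in $\sigma(\Phi_D)$ must be handled by declaring them stabilizable, not detectable — hence I would conclude detectability holds only for the subsystem excluding $\eta$, and the $\eta$-part is covered under stabilizability; on reflection the cleanest route is to observe $\sigma(\Phi_D)\subset\{|\lambda|=1\}$ so any mode not seen at the output but on the unit circle is not stable, which would \emph{break} detectability — so the correct statement must be that the output $\mathrm y$ does detect $\eta$ via $v_\zeta = \eta+\bar v_\zeta$ feeding $\zeta_D$ and thence $x_D$, and I must re-include that coupling which I dropped too early: $\zeta_D^+ = v_\zeta = \eta + \bar v_\zeta$, so in the closed augmented flow-discretized map the $\zeta_D$ state at the next step equals $\eta$ plus an input, meaning $\eta$ enters $x_D$ one step later through $L_D\zeta_D$. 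This coupling is what makes $\eta$ detectable, and tracing it through the PBH vector (so $v_\zeta$ is \emph{not} independent once the interconnection $v_\zeta=\eta+\bar v_\zeta$ is imposed) is the crux.

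\emph{Stabilizability.} By duality / a symmetric PBH argument: for $|\lambda|\ge 1$ and $w^\top$ with $w^\top[\mathbf A_{\rm aug}-\lambda I \ \ \mathbf B_{\rm aug}]=0$, the $v_\eta$-column of $\mathbf B_{\rm aug}$ forces the $\eta$-component of $w$ to vanish, the $\bar v_\zeta$-column forces the $\zeta_D$-component to vanish, the $v_u$-column gives $w_x^\top B_D=0$, and then the remaining relations reduce to $w_x^\top(A_D-\lambda I)=0$, $w_x^\top B_D = 0$ plus a coupling to $w_\xi$; stabilizability of $(A_D,B_D)$ — again Lemma~1(i) — kills $w_x$, and then $w_\xi^\top(F_{\rm f}-\lambda I)=0$ with $|\lambda|\ge1 > $ spectral radius of $F_{\rm f}$ forces $w_\xi=0$. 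So all four blocks vanish and stabilizability follows; here the washout filter's eigenvalue assignment strictly inside the unit disk does the work cleanly, so stabilizability is the easy direction.

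The main obstacle is the detectability direction, specifically handling the modes $\lambda\in\sigma(\Phi_D)$ on the unit circle: one must carefully track, through the interconnection constraints $v_\zeta=\eta+\bar v_\zeta$ and $y_{\rm f}=\hat y_m-\Gamma_{\rm f}\xi$ with $F_{\rm f}+G_{\rm f}\Gamma_{\rm f}=\Phi_D\otimes I_{q_m}$, that the washout filter does not \emph{hide} the extra-measurement steady-state mode from the regulator while simultaneously the internal-model state $\eta$ remains detectable through its effect on $x_D$ via $L_D$. The non-pathological sampling Assumption~\ref{ass-2} is essential precisely here, ensuring $\Phi_D$ (resp. $S_D$) retains distinct eigenvalues matching $\sigma(S)$ so that no spurious coincidences among $\sigma(A_D)$, $\sigma(\Phi_D)$, $\sigma(F_{\rm f})$ create an undetectable mode on $|\lambda|=1$. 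Once the PBH vectors are parametrized respecting all interconnections, the remaining computations are routine linear algebra, reducing everything to Lemma~1(i) and the construction of $(F_{\rm f},\Gamma_{\rm f},G_{\rm f})$ and $(\Phi_D\otimes I_{q_e},L)$.
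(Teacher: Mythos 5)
Your overall strategy (a PBH test on the augmented cascade) coincides with the paper's, and your stabilizability half is essentially right: the identity input columns for $v_\eta$ and $\bar v_\zeta$ annihilate the corresponding components of a left PBH vector, $F_{\rm f}-\lambda I$ is nonsingular for $|\lambda|\ge 1$ by construction, and stabilizability of $(A_D,B_D)$ (from Assumption \ref{ass-1}(i), Assumption \ref{ass-2} and the discretization-preservation result) finishes it. The detectability half, however, contains a genuine gap. First, your augmented matrix has a zero row for the $\zeta_D$ block, i.e.\ you drop the interconnection $\zeta_D^+=\eta+\bar v_\zeta$; you notice this mid-argument, but you never redo the PBH computation with the correct row $\begin{bmatrix}0&0&-\lambda I& I\end{bmatrix}$, which is precisely what forces $v_\zeta=\lambda^{-1}v_\eta$ and couples an $\eta$-eigenvector at $\lambda\in\sigma(\Phi_D)$ into $x_D$ through $L_D$. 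You label this "the crux" and stop there.

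Second, and more fundamentally, at $\lambda\in\sigma(\Phi_D)$ the washout filter by design \emph{hides} $C_m v_x$ rather than exposing it: the relations $(F_{\rm f}-\lambda I)v_\xi+G_{\rm f}C_mv_x=0$ and $\Gamma_{\rm f}v_\xi=C_mv_x$ combine to $\bigl((\Phi_D\otimes I_{q_m})-\lambda I\bigr)v_\xi=0$, which admits nontrivial solutions realizing every value of $C_mv_x$ (by observability of $(\Phi_D\otimes I_{q_m},\Gamma_{\rm f})$). So at exactly the critical frequencies the extra measurement carries no information, and detectability of $(A_D,C)$ — your fallback — cannot close the argument; recall the whole point of the paper is that $(A,C_e)$ alone need not be detectable. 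What is actually required is the discrete non-resonance rank condition (\ref{DNRC}),
\[
\mbox{rank}\begin{bmatrix} A_D-\lambda I & L_D\mathcal{T}_1(\lambda)\cr C_e & 0\end{bmatrix}=n+q_e,\qquad \lambda\in\sigma(S_D),
\]
which annihilates the combined $(v_x,v_\zeta,v_\eta)$ mode using $C_e$ only. The paper devotes a separate preliminary lemma to establishing this, by constructing an auxiliary continuous-time system whose detectability follows from Assumption \ref{ass-1} together with observability of $(\Phi\otimes I_{q_e},L)$ and $(\Phi\otimes I_{q_m},\Gamma_{\rm f})$, and then invoking the fact that non-pathological sampling preserves detectability under discretization. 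Your plan never states or derives this condition — your suggestion that "observability of $(F_{\rm f},\Gamma_{\rm f})$ kills that mode" is the opposite of what the washout filter does — so the decisive step of the proof is missing.
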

The proof of Lemma \ref{lemma-2} is given in Appendix \ref{app-lemma-2}. This lemma naturally suggests to  design a discrete-time output feedback stabilizer for the augmented discrete-time system (\ref{E-DTS}), (\ref{filter}), and (\ref{IMPre}) when $w_D=0$. Let
\beeq{\label{vartheta}\ba{l}
\dot \vartheta = 0\,,\quad
\vartheta^+ = A_\vartheta\,\vartheta + B_\vartheta \mathrm{y}\,,\quad \vartheta\in\mathbb{R}^{n_z-dq}\\
\bar v  = C_\vartheta\,\vartheta + D_\vartheta\,\mathrm{y}\,.
\ea}
be such a controller. The following theorem can be then proved.
\begin{theorem}\label{theo-2}
Let (\ref{vartheta}) be a stabilizer for the augmented system (\ref{E-DTS}), (\ref{filter}), and (\ref{IMPre}). Then, with system (\ref{DR}) defined by the cascade of (\ref{filter}), (\ref{IMPre}),  and (\ref{vartheta}), the requirements (a) and (b) in Theorem \ref{theo-1} are fulfilled.
\end{theorem}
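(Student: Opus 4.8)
The plan is to verify the two requirements of Theorem \ref{theo-1} — global exponential stability of the closed loop when $w_D=0$ (requirement (a)), and solvability of the regulator equations (\ref{RE-z}) (requirement (b)) — for the specific controller (\ref{DR}) obtained as the cascade of the washout filter (\ref{filter}), the internal model (\ref{IMPre}), and the stabilizer (\ref{vartheta}). Requirement (a) is essentially immediate: by Lemma \ref{lemma-2} the augmented discrete-time system (\ref{E-DTS}), (\ref{filter}), (\ref{IMPre}) is stabilizable and detectable w.r.t. $\bar v$ and $\mathrm{y}$ when $w_D=0$, and (\ref{vartheta}) is by hypothesis a stabilizer for it; hence the interconnection of (\ref{E-DTS}) with $z$-state $(\xi,\eta,\vartheta)$ is globally exponentially stable at the origin. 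One just needs to record that the state dimension adds up, $n_z = dq + (n_z - dq)$ with $dq = dq_e + dq_m$, so that (\ref{DR}) is indeed the controller of the claimed form, and that the maps $(A_z,B_z,K_z,L_z)$ induced by the cascade are well-defined.

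The substance is requirement (b). First I would write down the steady-state identities enforced by each block. For the washout filter: since $\Phi_D\otimes I_{q_m} = F_{\rm f} + G_{\rm f}\Gamma_{\rm f}$ and $F_{\rm f}$ is Schur, the Sylvester equation $\Pi_\xi S_D = F_{\rm f}\Pi_\xi + G_{\rm f} Y_m$ has a unique solution $\Pi_\xi$, and one checks $\Gamma_{\rm f}\Pi_\xi = Y_m$ (equivalently $(\Phi_D\otimes I_{q_m})\Pi_\xi = \Pi_\xi S_D$), so the filter output along the steady state satisfies $y_{\rm f} = \hat y_m - \Gamma_{\rm f}\xi \mapsto Y_m w_D - \Gamma_{\rm f}\Pi_\xi w_D = 0$. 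Thus $\mathrm{y} = \col(\hat e, y_{\rm f})$ is entirely "invisible" in steady state: $\hat e = C_e\Pi_x + Q_e = 0$ by the third equation of (\ref{REs-2}), and $y_{\rm f}=0$ by the above. For the internal model: I set $\Pi_\eta := \Pi_\zeta$ (this is the key ansatz), so that the second equation of (\ref{IMPre}) gives, along the steady state, $v_\zeta = \eta + \bar v_\zeta \mapsto \Pi_\zeta S_D w_D$ provided $\bar v_\zeta$ steady state is zero and $\Pi_\eta S_D = (\Phi_D\otimes I_{q_e})\Pi_\eta + \Pi_{v_\eta}$ with $\Pi_{v_\eta}$ the steady state of $v_\eta$; using $\Pi_\zeta S_D = (\Phi_D\otimes I_{q_e})\Pi_\zeta$ from (\ref{REs-2}) this forces $\Pi_{v_\eta}=0$. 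Finally, because $\mathrm{y}\equiv 0$ in steady state, the stabilizer (\ref{vartheta}) contributes $\Pi_\vartheta S_D = A_\vartheta\Pi_\vartheta$ with $A_\vartheta$ Schur, hence $\Pi_\vartheta = 0$ and $\bar v = C_\vartheta\Pi_\vartheta w_D = 0$, i.e. both $v_u$ and $\bar v_\zeta$ have zero steady state and $v_\eta = C_{\vartheta,\eta}\Pi_\vartheta = 0$ is consistent with $\Pi_{v_\eta}=0$ above.

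Assembling these, the composite controller state is $\Pi_z := \col(\Pi_\xi, \Pi_\eta, \Pi_\vartheta) = \col(\Pi_\xi, \Pi_\zeta, 0)$, and I would verify directly that this $\Pi_z$ satisfies the two equations of (\ref{RE-z}): the first (the $\Pi_z$-dynamics equation) reduces blockwise to the filter Sylvester equation, the internal-model equation $\Pi_\eta S_D = (\Phi_D\otimes I_{q_e})\Pi_\eta$, and $\Pi_\vartheta S_D = A_\vartheta \Pi_\vartheta$, all of which hold; the second equation of (\ref{RE-z}), partitioned as (\ref{eq:0})–(\ref{eq:Pi_zeta}), reads $0 = $ (steady-state $v_u$ and $\bar v_\zeta$ contributions) which we showed vanish, and $\Pi_\zeta S_D = $ (steady-state $v_\zeta = \eta + \bar v_\zeta$), which holds by construction of $\Pi_\eta$. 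The main obstacle — really the only nontrivial point — is bookkeeping: keeping the block partitions of $K_z,L_z$ into $(u,\zeta)$-components and of $\bar v$ into $(v_u,\bar v_\zeta,v_\eta)$ consistent through the three cascaded subsystems, and confirming that the washout filter genuinely annihilates $Y_m w_D$ (i.e. that $\Gamma_{\rm f}\Pi_\xi = Y_m$, which is exactly the content of the matching condition $\Phi_D\otimes I_{q_m} = F_{\rm f}+G_{\rm f}\Gamma_{\rm f}$ together with observability of $(F_{\rm f},\Gamma_{\rm f})$). Everything else is linear algebra that follows from the Sylvester equations having unique solutions because the relevant spectra ($\sigma(F_{\rm f})$, $\sigma(A_\vartheta)$) are disjoint from $\sigma(S_D)$.
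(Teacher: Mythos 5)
Your overall route is the same as the paper's: requirement (a) follows from Lemma \ref{lemma-2} plus the hypothesis that (\ref{vartheta}) stabilizes the augmented system, and requirement (b) is verified by exhibiting $\Pi_z=\col(\Pi_\xi,\Pi_\eta,\Pi_\vartheta)$ built from the steady states of the filter, the internal model, and the stabilizer, with $\Pi_\vartheta=0$ because $\mathrm{y}=\col(\hat e,y_{\rm f})$ vanishes in steady state. The washout-filter part ($\Gamma_{\rm f}\Pi_\xi=Y_m$ from $\Phi_D\otimes I_{q_m}=F_{\rm f}+G_{\rm f}\Gamma_{\rm f}$ and observability of $(F_{\rm f},\Gamma_{\rm f})$, which is (\ref{Re-f}) in the paper) is correct.

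However, what you call ``the key ansatz'' is wrong: you set $\Pi_\eta:=\Pi_\zeta$, whereas the correct choice (and the one the paper makes) is $\Pi_\eta=\Pi_\zeta S_D$. The reason is the timing in (\ref{E-DTS}) and (\ref{DR}): in steady state $\zeta_D=\Pi_\zeta w_D$, and the jump map $\zeta_D^+=v_\zeta$ forces the \emph{current} value of $v_\zeta$ to equal $\Pi_\zeta w_D^+=\Pi_\zeta S_D w_D$, which is exactly the left-hand side of (\ref{eq:Pi_zeta}). Since $v_\zeta=\eta+\bar v_\zeta$ and $\bar v_\zeta$ has zero steady state, the steady state of $\eta$ must be $\Pi_\zeta S_D w_D$, not $\Pi_\zeta w_D$. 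With your choice $\Pi_\eta=\Pi_\zeta$ the internal-model \emph{dynamics} equation $\Pi_\eta S_D=(\Phi_D\otimes I_{q_e})\Pi_\eta$ still holds (both $\Pi_\zeta$ and $\Pi_\zeta S_D$ satisfy it, by right-multiplying $\Pi_\zeta S_D=(\Phi_D\otimes I_{q_e})\Pi_\zeta$ by $S_D$), so the slip is invisible there; it only surfaces in the output equation (\ref{eq:Pi_zeta}), where your $\Pi_z$ yields $\Pi_\zeta$ on the right-hand side instead of $\Pi_\zeta S_D$, and these differ unless $\Phi_D\otimes I_{q_e}=I$. The direct verification you defer to would therefore fail as written; replacing $\Pi_\eta$ by $\Pi_\zeta S_D$ repairs the proof and makes it coincide with the paper's. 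A minor additional point: you do not need (and cannot in general assert) that $A_\vartheta$ is Schur or that $\sigma(A_\vartheta)\cap\sigma(S_D)=\emptyset$; requirement (b) only asks for \emph{existence} of a solution of (\ref{RE-z}), and $\Pi_\vartheta=0$ is checked directly to be one.
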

\begin{proof}
  It is clear that by designing the controller (\ref{DR}) as the cascade of (\ref{filter}), (\ref{IMPre}) and (\ref{vartheta}), the requirement (a) in Theorem \ref{theo-1} is fulfilled. Now we proceed to show that the requirement (b) is also fulfilled.

We first note that since $(F_{\rm f},\Gamma_{\rm f})$ is observable and $\Phi_D\otimes I_{q_m}=F_{\rm f}+G_{\rm f} \Gamma_{\rm f}$, given any $Y_m\in\mathbb{R}^{q_m\times d}$, there exists a unique solution $\Pi_{\rm f}\in\mathbb{R}^{q_md\times d}$ for the linear matrix equation
\beeq{\label{Re-f}\ba{rcl}
\Pi_{\rm f}\, S_{D} &=& F_{\rm f}\, \Pi_{\rm f} + G_{\rm f}\, Y_m\,\\
0 &=& Y_m  - \Gamma_{\rm f}\,\Pi_{\rm f}\,.
\ea}
Then, for any $\Pi_\zeta$ satisfying (\ref{REs-2}), we have
\beeq{\label{Re-im}\ba{rcl}
\Pi_{\eta}\, S_{D} &=& (\Phi_D\otimes I_{q_e})\, \Pi_{\eta}\,
\ea}
with $\Pi_\eta = \Pi_\zeta S_D$.
Thus, it can be easily verified that with the controller (\ref{DR}) as the cascade of (\ref{filter}), (\ref{IMPre}) and (\ref{vartheta}), the resulting matrix equations (\ref{RE-z}) permit a solution $\Pi_z=\begin{bmatrix}\Pi_{\rm f}^{\top} & \Pi_\eta^{\top} & {\bf 0}_{d\times (n_z-dq)}\end{bmatrix}^{\top}$, i.e., the resulting requirement (b) in Theorem \ref{theo-1} is fulfilled. $\blacksquare$
\end{proof}

We conclude the section by observing that the proposed controller fits in the ``pre-processing" structure delineated in \cite{Wang2020} and that, consistently with the prescriptions of \cite{Lawrence2001,Marconi2013}, the internal model unit contains $q_e$ copies of the continuous-time exosystem and $q_e$ copies of the discretized exosystem (see Figure \ref{fig1}).

\begin{figure}[thpb]
\begin{center}
\centering\includegraphics[height=30mm,width=70mm]{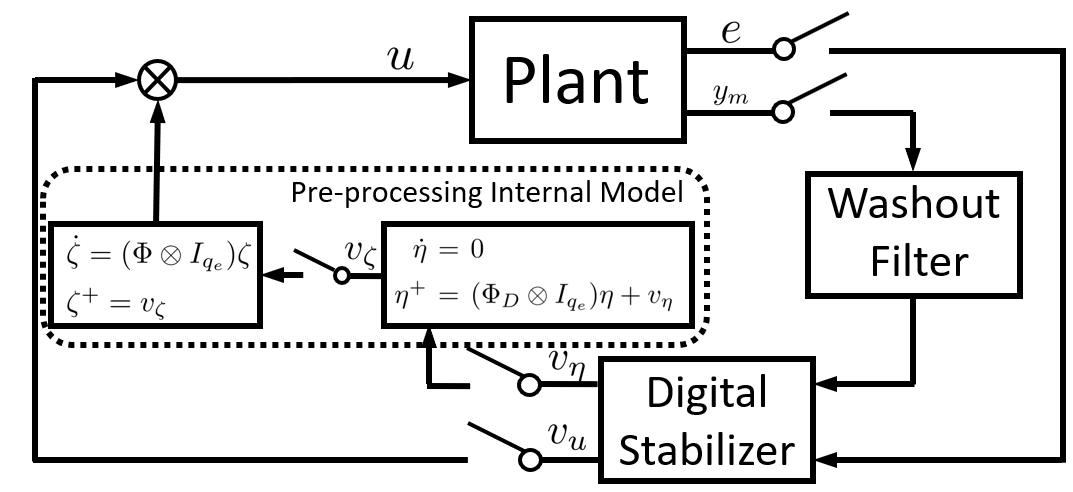} \caption{Structure of the proposed regulator.}
\label{fig1}
\end{center}
\end{figure}

\section{Multi-Rate Digital Regulator}
\label{sec-5}

In the previous section, we have proposed a robust hybrid regulator solving the considered problem globally and exponentially, even if the measurements are sampled with a very large interval. The regulator is implemented in a hybrid manner, i.e., a combination of a digital controller and a generalized hold device. In this section, this hybrid regulator is further developed for a pure digital regulator. Recalling that the sampling interval of measurements can be almost arbitrarily large in Theorem \ref{theo-2}, we thus adopt multi-rate samplings for system (\ref{ini-sys}), i.e., the control execution period is $\frac{T}{N}$ where $T$, that can be very large, is the sampling period of measurements and parameter $N\in\mathbb{N}_+$ is used to determine the control execution interval. In this respect, the control signal $u$ can be modelled by the following hybrid form
\beeq{\label{eq:control}
\ba{l}
\left\{
\ba{rcl}
\dot \tau_c &=& 1\,\\
\dot u &=& 0\,\\
\ea
\right. \,,\qquad  \qquad \mbox{for }(\tau_c,u)\in[0,\frac{T}{N})\times\mathbb{R}^m\\
\left\{
\ba{rcl}
\tau_c^+ &=& 0\\
u^+ &=& \hat u \\
\ea
\right. ,\qquad \qquad  \mbox{for }(\tau_c,u)\in \{\frac{T}{N}\}\times\mathbb{R}^m\,
\ea}
where $\tau_c$ is a clock state and $\hat u$ is an input to be determined.

Following the design in Section \ref{sec-4}, we design $\hat u$ as
\beeq{\label{eq:hat_u}\ba{l}
\hat u = L\zeta + K_{z,u} z + L_{z,u}\hat y\\
\left\{\ba{l}
\dot \tau = 1 \\
\dot \zeta = (\Phi\otimes I_{q_e}) \zeta \\
\dot z =0\,\\
\ea\right., \qquad  \mbox{for }(\tau,\zeta,z)\in[0,T)\times\mathbb{R}^{n_c}\\
\left\{\ba{l}
\tau^+ = 0\\
\zeta^+ = K_{z,\zeta} z + L_{z,\zeta}\hat y\\
z^+ = A_z\, z + B_z\, \hat y\\
\ea\right.,   \mbox{for }(\tau,\zeta,z)\in\{T\}\times\mathbb{R}^{n_c}
\,\\
\ea}
with $(\Phi\otimes I_{q_e}\,,L)$ observable. Let $K_z=\begin{bmatrix}K_{z,u} \cr K_{z,\zeta}\end{bmatrix}$ and $L_z=\begin{bmatrix}L_{z,u} \cr L_{z,\zeta}\end{bmatrix}$. The resulting closed-loop stability is formulated below, with proof given in Appendix \ref{app-theo-3}.

\begin{theorem}\label{theo-3}
   Suppose that Assumption \ref{ass-1}  holds. Let $T>0$ be any number satisfying Assumption \ref{ass-2}, and choose $A_z,B_z,K_z,L_z$ according to Theorem \ref{theo-2}. Then there exist a $N^\ast\in\mathbb{N}_+$ and a $\gamma^\dag>0$ such that for all $N\geq N^\ast$, the trajectories of the resulting closed-loop system (\ref{ini-sys}) with a digital regulator (\ref{eq:control})-(\ref{eq:hat_u}) are bounded, and
   \beeq{\label{eq:e-bound}
  \lim_{t+j\rightarrow\infty}\|e(t,j)\| \leq \frac{\gamma^\dag}{\sqrt{N}}\| \Psi S\| \,.
  }
\end{theorem}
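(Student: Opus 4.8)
The plan is to read the multi-rate loop (\ref{ini-sys}), (\ref{eq:control})--(\ref{eq:hat_u}) as an \emph{emulation} of the exact hybrid regulator of Section \ref{sec-4} and to transport the exponential stability established there. Recall that, with $A_z,B_z,K_z,L_z$ fixed as in Theorem \ref{theo-2}, the regulator (\ref{zhd}), (\ref{u}), (\ref{DR}) solves the problem exactly: by Theorem \ref{theo-1}, in the discrete-equivalent coordinates (\ref{eq:def-xD}) with $w_D=0$ the closed loop is globally exponentially stable and $e$ converges to zero. First I would introduce the mismatch $\tilde u := u-(L\zeta+K_{z,u}z+L_{z,u}\hat y)$ between the applied (held) input and the one the exact regulator would apply. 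Since $z$ and $\hat y$ are constant over each slow period $[t_k,t_{k+1})$ while $u$ is reset at every fast instant to $L\zeta+K_{z,u}z+L_{z,u}\hat y$ and satisfies $\dot u=0$ in between, $\tilde u$ is reset to zero at each fast instant and obeys $\dot{\tilde u}=-L(\Phi\otimes I_{q_e})\zeta$ along the fast flow; hence $\|\tilde u(t)\|\le (T/N)\,c_1\sup_s\|\zeta(s)\|$, the supremum over the current slow period and $c_1$ depending only on $L$, $\Phi$, $T$. Consequently the multi-rate closed loop coincides with the Section \ref{sec-4} closed loop up to the additive term $B\tilde u$ in the $\dot x$-equation (plus the harmless extra states $\tau_c$ and $u$).

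Next I would split $\tilde u=\tilde u_{\rm ss}+\tilde u_{\rm tr}$, where $\tilde u_{\rm ss}$ is the value of $\tilde u$ along the Section \ref{sec-4} steady state ($x=\Pi_x w$, $\zeta=\Pi_\zeta w$, $z=\Pi_z w$). Using $\Pi_\zeta S=(\Phi\otimes I_{q_e})\Pi_\zeta$ and $\Psi=L\Pi_\zeta$ from (\ref{RE-4}) one gets $L(\Phi\otimes I_{q_e})\zeta_{\rm ss}=\Psi S w$, so that $\|\tilde u_{\rm ss}(t)\|\le (T/N)\,|\mathcal W|\,\|\Psi S\|$, while $\|\tilde u_{\rm tr}(t)\|\le (T/N)\,c_2\|\chi\|$ with $\chi$ the error coordinate in which Theorem \ref{theo-1}'s stability is expressed. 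Pushing $B\tilde u$ through the change of variables (\ref{eq:def-xD})--(\ref{E-DTS}) over one slow period, it contributes $\int_0^Te^{A(T-r)}B\tilde u(r)\,\mathrm{d}r$ to $x_D^+$, so the homogeneous Schur recursion underlying Theorem \ref{theo-1} becomes $\chi^+=\mathbf{A}_\star\chi+d_k$ with $\mathbf{A}_\star$ Schur and $\|d_k\|\le N^{-1}\big(c_3\|\chi\|+c_4\|\Psi S\|\big)$, all $c_i$ independent of $N$.

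For $N$ beyond a threshold $N^\ast$ the term $c_3N^{-1}\|\chi\|$ is dominated by the contraction of $\mathbf{A}_\star$, so $\mathbf{A}_\star$ together with the state-proportional part of $d_k$ remains a contraction in a suitable weighted norm and the recursion is input-to-state stable; this gives boundedness of all trajectories ($u=(L\zeta+K_{z,u}z+L_{z,u}\hat y)+\tilde u$ and $\tau_c\in[0,T/N]$ being bounded too) and $\limsup_k\|\chi\|\le(\gamma^\dag/\sqrt N)\|\Psi S\|$, from which (\ref{eq:e-bound}) follows once $e(t,j)$ is reconstructed from the inverse of (\ref{eq:def-xD}) over one slow period (which adds only a further $O(1/N)$ term from $\int_0^\tau e^{A(\tau-r)}B\tilde u\,\mathrm{d}r$). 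The $1/\sqrt N$ rate is exactly what a Lemma \ref{lemma-3.1}/Proposition \ref{proposition-1}--type estimate produces with fast period $T/N$: there the sample-and-hold disturbance is bounded by the period times a closed-loop state rate, that rate is in turn bounded by a gain times the disturbance, and closing this loop requires the product of period and gain to stay bounded; hence the free gain may be scaled proportionally to $N$, and the resulting error bound, behaving like the inverse square root of the gain, scales like $1/\sqrt N$. Carrying this through for the hybrid Section \ref{sec-4} loop yields $N^\ast$ and $\gamma^\dag$.

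I expect the main obstacle to be that $\tilde u$ is partly proportional to the state itself and is therefore not a genuine exogenous input: $N$ must be taken large enough to fold $c_3N^{-1}\|\chi\|$ into the exponential decay of $\mathbf{A}_\star$, and, more delicately, the final estimate must be carried with the correct dependence on $\|\Psi S\|$ (rather than on $|\mathcal W|$ or on $\|\chi\|$) through the coordinate change and across one slow period. A secondary technicality is the hybrid-time bookkeeping for the nested fast and slow clocks, in particular fixing the ordering of the simultaneous jumps of $u$, $\zeta$ and $z$ at $\tau=T$ so that $\tilde u$ is indeed reset to zero (a one-step discrepancy there only enlarges $d_k$ by another $O(1/N)$ and is harmless).
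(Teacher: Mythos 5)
Your proposal is correct, and its core coincides with the paper's proof: the paper likewise introduces the mismatch $\varepsilon:=u-\hat u$ (your $\tilde u$), which obeys $\dot\varepsilon=-L(\Phi\otimes I_{q_e})\zeta$ during fast flows and is reset to zero at every fast jump, splits it into a steady-state part identified as $\Psi S e^{S\tau}w$ via $L(\Phi\otimes I_{q_e})\Pi_\zeta=L\Pi_\zeta S=\Psi S$ from (\ref{RE-4}) and a part proportional to $\tilde\zeta_D$, and treats the multi-rate loop as the exponentially stable Section~\ref{sec-4} loop perturbed only by $B\varepsilon$ in the $x$-dynamics. The one genuine difference is how the estimate is closed. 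The paper uses a composite hybrid Lyapunov function $U=e^{-k_1\tau_{cl}}V_{cl}+e^{-k_2 s}\|\varepsilon\|^2$ with $k_2=\alpha^\ast k_1 N$ and an optimized $\alpha^\ast$, yielding $\dot U\leq -c\|\tilde X_{cl}\|^2-c'\|\varepsilon\|^2+(4/k_2)\|\Psi S w\|^2$ and hence $\limsup\|\tilde X_{cl}\|^2=O(1/N)$, i.e.\ precisely the $1/\sqrt N$ rate in (\ref{eq:e-bound}). You instead integrate the perturbation over one slow period into a perturbed Schur recursion $\chi^+=\mathbf{A}_\star\chi+d_k$ with $\|d_k\|\leq N^{-1}(c_3\|\chi\|+c_4\|\Psi S\|)$ and invoke ISS for large $N$. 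Your route is, if anything, sharper: the recursion gives $\limsup\|\chi\|=O(1/N)\|\Psi S\|$, not merely $O(1/\sqrt N)$, because it avoids the Young-inequality loss in the paper's cross term $2\|\varepsilon\|\|\Psi S w\|\leq (k_2/4)\|\varepsilon\|^2+(4/k_2)\|\Psi S w\|^2$; so your closing appeal to a Proposition~\ref{proposition-1}-type small-gain tradeoff to ``explain'' the $1/\sqrt N$ is unnecessary and slightly at odds with your own (stronger) recursion --- either way the stated bound follows. Your treatment of the jump ordering at $\tau=T$ matches the paper's convention ($\varepsilon^+=0$ at every fast jump), and the final reconstruction of $e=C_e\tilde x$ through the coordinate change (\ref{eq:def-xD}) is the paper's last step verbatim.
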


\begin{remark}
  In contrast with (\ref{eq:emu-e}) in Proposition \ref{proposition-1}, (\ref{eq:e-bound}) demonstrates that the regulation error $e$ eventually converges to a set in relation to the time derivative of the desired steady states of $u$, independent of that of $y_m$. In this respect, when the desired steady state of $u$ is constant and that of $y_m$ is time-varying, the digital regulator (\ref{eq:control})-(\ref{eq:hat_u}) can guarantee that the regulation errors exponentially converge to zero, while there is no such guarantee for the emulation-based approach by Proposition \ref{proposition-1}.
\end{remark}

\section{An Example}
\label{sec-6}

Consider the output regulation problem of an inverted pendulum on a cart \cite{Khalil2002,Kwakernaak1972}, whose linearly approximated model is described by
\beeq{\label{InPC}
\ba{rcl}
m_0\ddot q &=& -mg\theta - \mu_f\dot q + u + P_1w\,\\
m_0\ell \ddot\theta &=& (m_0+m)g\theta + \mu_f\dot q - u + P_2 w
\ea}
where $q$ is the distance of the cart from the zero reference, $\theta$ is the angle of the pendulum w.r.t. the vertical axis, input $u$ is the horizontal force applied to the cart, and $P_1 w$ and $P_2w$ denote perturbations to $q$-dynamics and $\theta$-dynamics, respectively, with exogenous variable $w$ being simply generated by an oscillator of the form
\[
\dot w = S w\,,\quad S=\begin{bmatrix}0 & 1 \cr -\Omega^2 & 0\end{bmatrix}\,.
\]
All other parameters are as in \cite{WangAut2019}. Suppose both $q$ and $\theta$ are measured periodically by sensors, with the sample period $T=0.1$. In this setting, the problem in question is to design an implementable regulator taking advantage of the sampled measurements such that all closed-loop signals are bounded and the regulation output $e(t):=\theta(t)$ asymptotically converges to zero.

Denote $y_m(t):=q(t)+\ell\theta(t)$, which is periodically available as $q(t)$ and $\theta(t)$ are measured periodically.
Thus, by setting $x:=\col(q+\ell\theta,\dot q+\ell\dot\theta,\theta,\dot\theta)$, we can rewrite (\ref{InPC}) in the form (\ref{ini-sys}) with
\[\ba{l}
A=\begin{bmatrix}0 &1 & 0 & 0 \cr 0 & 0 & g &0 \cr 0 &0&0& 1 \cr 0& \frac{\mu_f}{m_0\ell} & \frac{(m_0+m)g}{m_0\ell} & -\frac{\mu_f}{m_0} \end{bmatrix}\,,\quad B=\begin{bmatrix}0\cr 0 \cr 0 \cr {-1\over m_o\ell}\end{bmatrix} \,,\\
C_e = \begin{bmatrix}0 & 0 & 1 & 0\end{bmatrix}\,, \quad C_m = \begin{bmatrix}1 & 0 & 0 & 0\end{bmatrix}\,,\\  P^\top=\begin{bmatrix}0& {P_1^\top+P_2^\top\over m_0} & 0 & P_2^\top\over m_0\ell\end{bmatrix}\,,\quad Q_e=Q_m=0\,.
\ea\]
It is clear that the detectability property is fulfilled by the whole vector $(e,y_m)$, i.e., the pair $(A,C)$, instead of $(A,C_e)$ is detectable, with $C=\big[C_e^\top \,\, C_m^\top \big]^\top$.
Letting $m_0=0.5, m=2, \mu_f=0.2, g=9.8$, $\ell=0.3$ and $\Omega=5$, straightforward calculations show that Assumptions \ref{ass-1} and \ref{ass-2} are fulfilled.

Following the design paradigm proposed in Section \ref{sec-4}, we design the generalized zero-order hold device (\ref{zhd}) and the feedback law (\ref{u}) with
$L = \big[1 \,\, 0\big]$. Let $\Phi_D=\mbox{exp}(\Phi T)$ and $\Gamma_{\rm f} = \big[1 \,\, 0\big]$. We then design the compensator (\ref{IMPre}), and  the washout filter (\ref{filter}) via the $H_{\infty}$ control \cite{Green2012} with
\[
F_{\rm f}=\begin{bmatrix}0.5557&0.0959 \cr -1.55&0.8776\end{bmatrix}\,, G_{\rm f} = \begin{bmatrix}0.3219\cr -0.8471\end{bmatrix}\,.
\]
With the above design, the remaining problem is to design a discrete-time output feedback stabilizer for the corresponding discrete-time system (\ref{E-DTS}), (\ref{filter}), and (\ref{IMPre}),  which  can be easily solved via the $H_{\infty}$ control \cite{Green2012} again.

As seen from simulation results in Fig. \ref{fig2}, it can be seen that the regulation error $e(t)$ converges to zero and $y_m(t)$ is bounded.
Regarding a multi-rate digital regulator, the control action is executed with a period $\frac{T}{4}$. The simulation results are presented in Fig. \ref{fig3}, where both $e(t)$ and $y_m(t)$ are bounded. In contrast, we apply the emulation-based approach to design a digital regulator (\ref{eq:regulator-Em2}). When the sampling period is $T$, it is found that the trajectories of $e(t)$ and $y_m(t)$ are unbounded. When the sampling period is $\frac{T}{4}$, the simulation results are given in Fig. \ref{fig4}, where both $e(t)$ and $y_m(t)$ are bounded.

\begin{figure}[thpb]
\begin{center}
\centering\includegraphics[height=45mm,width=60mm]{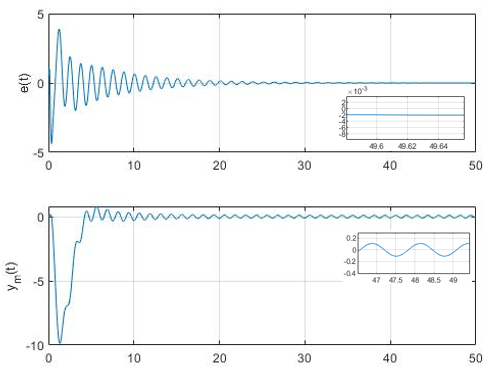} \caption{Trajectories of $e(t),y_m(t)$ with a generalized hold device.}
\label{fig2}
\end{center}
\end{figure}

\begin{figure}[thpb]
\begin{center}
\centering\includegraphics[height=45mm,width=60mm]{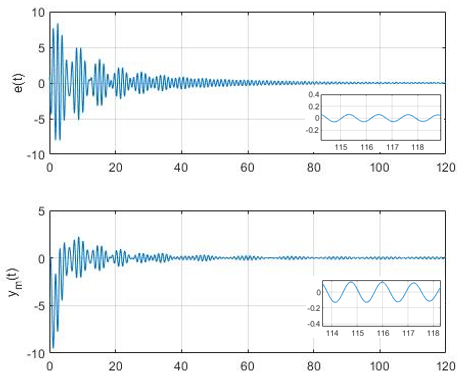} \caption{Trajectories of $e(t),y_m(t)$ with a multi-rate regulator.}
\label{fig3}
\end{center}
\end{figure}

\begin{figure}[thpb]
\begin{center}
\centering\includegraphics[height=45mm,width=60mm]{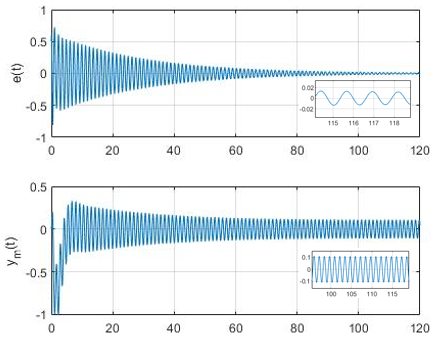} \caption{Trajectories of $e(t),y_m(t)$ using the emulation-based approach with sampling period $\frac{T}{4}$.}
\label{fig4}
\end{center}
\end{figure}

\section{Conclusion}
\label{sec-conclusion}

In this paper, the robust implementable output regulator design problem has been investigated for general linear continuous-time \mbox{systems} with periodically sampled measurements, consisting of both the regulation errors and extra measurements that are generally non-vanishing in steady state. We showed that the conventional emulation-based solution cannot be used to handle the problems when the sampling period is large and the asymptotic regulation is desired. Motivated by this, we proposed a design framework by incorporating a generalized zero-order hold device, which transforms the original problem into the problem of designing an output feedback controller fulfilling two conditions for a discrete-time system. With the design of a discrete-time compensator and a discrete-time washout filter, it has been shown that there always exists a discrete-time output feedback stabilizer for the resulting augmented system, which, together with  the previously designed compensator and  filter, completes the design of the controller. The resulting regulator structure aligns with the pre-processing scheme proposed in \cite{Wang2020}, by regarding the generalized zero-order hold device and the discrete-time compensator as an internal model.
Furthermore, the framework was generalized by proposing a multi-rate digital regulator, guaranteeing the regulation errors bounded by a constant, depending on the time derivative of the desired steady states of  control inputs, and adjustable by the control execution period.

\appendix

\section{Proof of Proposition \ref{proposition-1}}
\label{proof-pro-1}

The proof mainly follows the idea of \cite{Astolfi2018ECC}. Let
\[\ba{l}
\tilde x = x-\Pi_x w\,,\quad  \tilde x_c = x_c-\Pi_{x_c} w\,,\\
\tilde y = y-\hat y\,, \quad \tilde u = C_c { x}_c - u\,,
\ea\]
which rewrites the resulting closed-loop system into the form consisting of  (i) the flow dynamics
\[\ba{rcl}
\dot \tau &=& 1\,\\
\dot {\tilde x} &=& A\,{\tilde x} + B\, C_c {\tilde x}_c - B\, \tilde u \,\\
\dot {\tilde x}_c &=& A_c \tilde x_c + B_c C\tilde x - B_c\tilde y\,\\
\dot {\tilde y} &=& CA\tilde x + CBC_c\tilde x_c - C B\, \tilde u+ \begin{bmatrix}0\cr Y_mSw\end{bmatrix}\,\\
\dot{\tilde u} &=& C_c A_c \tilde x_c + C_c B_c C\tilde x - C_cB_c\tilde y + C_c\Pi_{x_c}Sw
\ea
\]
for $\tau\in[0,T)$, and (ii) the jump dynamics
\[
\ba{l}
\tau^+ = 0\,,\quad
\tilde x^+ = \tilde x\,,\quad
\tilde x_c^+ = \tilde x_c\,,\quad
\tilde y^+ = 0\,,\quad
\tilde u^+ = 0\,,\quad
\ea
\]
for $\tau=T$.
With $\tau_{\max}$ defined in (\ref{eq:tau_max}), we let $\lambda\in(0,1)$ and $\phi:[0,\tau_{\max}]\rightarrow\mathbb{R}$ be the solution of
  \[
  \dot\phi = -2\kappa\phi - \gamma(\phi^2+1)\,,\quad \phi(0) = \lambda^{-1}\,.
  \]
  According to \cite{Carnevale2007}, we have $\phi\in[\lambda,\lambda^{-1}]$. Then we can choose a Lyapunov function as
  \[
  V(\tau,\tilde x,\tilde x_c,\tilde y)=\begin{bmatrix}\tilde x \cr\tilde x_c\end{bmatrix}^\top\mathbf{P}\begin{bmatrix}\tilde x \cr\tilde x_c\end{bmatrix} + \phi(\tau) \begin{bmatrix}\tilde y \cr \tilde u\end{bmatrix}^\top\begin{bmatrix}\tilde y \cr \tilde u\end{bmatrix}
  \]
  which implies
  \[
  \dot V \leq -2\kappa V + \frac{2}{\gamma}\left\|\begin{bmatrix}Y_mSw\cr C_c\Pi_{x_c}Sw\end{bmatrix}\right\|^2
  \]
  during flow and $V^+ \leq V$  during jump.
  Therefore, it can be easily verified that the statement (i) is true, and
  \[
  \lim_{t+j\rightarrow\infty}\|\tilde x(t,j)\|^2 \leq \frac{1}{\gamma\kappa\|\mathbf{P}\|}\left\|\begin{bmatrix}Y_mSw\cr C_c\Pi_{x_c}Sw\end{bmatrix}\right\|^2\,,
  \]
  yielding (\ref{eq:emu-e}) with $k_{\rm em}={\|C_e\||\mathcal{W}|}/{\sqrt{\|{\bf P}\|}}$.
  This completes the proof. $\blacksquare$

\section{Proof of Theorem \ref{theo-1}}
\label{proof-theo-1}

By setting $\chi:=\col(x_D,\zeta_D,z)$, the resulting closed-loop (\ref{E-DTS}), (\ref{DR}) can be compactly described by
\beeq{\ba{rcl}
\dot w_D &=& 0\,,\quad \dot\chi =0\,,\\
w_{D}^+ &=& S_D \, w_D\,,\quad
\chi^+ = A_{cl} \chi + P_{cl} w_D
\ea}
for some appropriately defined matrices $A_{cl},P_{cl}$. With the requirement (a), it immediately follows that $|\sigma(A_{cl})|<1$, i.e., all eigenvalues of $A_{cl}$ lie within the unit circle. Thus there exists a {unique} $\Pi_{cl}\in\mathbb{R}^{(n+dq_e+n_z)\times d}$ such that
\beeq{\label{Pi-cl}
\Pi_{cl} S_D = A_{cl} \Pi_{cl} + P_{cl}\,.
}
With condition (b), there exists a solution $\Pi_z$ for the equations (\ref{RE-z}). With $\Pi_x,\Pi_\zeta$ being a solution of (\ref{REs-2}), it can be easily concluded that $\Pi_{cl}=\col(\Pi_x,\Pi_\zeta,\Pi_z)$ is a solution of (\ref{Pi-cl}), and thus the unique one.

Let $\rho:=\col(x,\zeta,e,y_m,z)$, which compactly expresses the hybrid system (\ref{ex-sys-f}), (\ref{ex-sys-j}), and (\ref{DR}) as the form
\beeq{\label{cl-hyb}\ba{rcl}
\dot w &=& Sw\,,\quad w^+ = w\,\\
\dot \rho &=& F_{cl} \rho + P_{F}w\,\\
\rho^+ &=& J_{cl} \rho + P_{J}w
\ea}
with some appropriately defined matrices $F_{cl},J_{cl},P_{F},P_{J}$, with  $|\sigma(J_{cl}e^{F_{cl}T})|<1$ by the requirement (a). Thus system (\ref{cl-hyb}) is exponentially stable at the invariant set $\mathcal{M}=\{(\tau,w,\rho): \rho=\widehat\Pi_{cl}(\tau) w\}$ with $\widehat\Pi_{cl}(\tau):[0,T)\rightarrow\mathbb{R}^{(n+dq_e+n_z)\times d}$ being the \emph{unique} solution of the equations
\beeq{\label{Re-cl-hy}\ba{rcl}
\frac{d \widehat\Pi_{cl}(\tau)}{d\tau} + \widehat\Pi_{cl}(\tau)S &=& F_{cl} \widehat\Pi_{cl}(\tau) + P_{F}\,\\
\widehat\Pi_{cl}(0) &=& J_{cl}\widehat\Pi_{cl}(T) + P_J\,.
\ea}
With (\ref{RE-3}), (\ref{RE-4}), and (\ref{RE-z}), simple calculations show that
\[
\widehat\Pi_{cl}(\tau):=\col(\Pi_x,\Pi_\zeta,0,Y_m\,e^{-S\tau},\Pi_z\,e^{-S\tau})
\]
is a solution of (\ref{Re-cl-hy}), and thus is the unique one. Since $C_e\Pi_x+Q_e=0$ in (\ref{RE-3}), it indicates that $e(t):=C_ex(t) + Q_ew(t)$ vanishes in  $\mathcal{M}$. The proof is thus completed. $\blacksquare$

\section{Proof of Lemma \ref{coro-1}}
\label{proof-coro-1}

The ``if" part has been proved in Theorem \ref{theo-1}. As for the proof of ``only if" part, we can see that the requirement (a) is clear. Thus, we now focus on the proof of the requirement (b). Using the notations in the proof of Theorem \ref{theo-1}, we use (\ref{cl-hyb}) to denote the resulting hybrid closed-loop system (\ref{ini-sys}), (\ref{zhd}), (\ref{u}), and (\ref{DR}). Simple calculations show that (\ref{Re-cl-hy}) has the unique solution $\hat\Pi_{cl}(\tau)$, which can be partitioned as
\[
\widehat\Pi_{cl}(\tau):=\col(\Pi_x(\tau),\Pi_\zeta(\tau),0,Y_m\,e^{-S\tau},\Pi_z\,e^{-S\tau})\,
\]
and
\beeq{\label{ro}
0 = C_e \Pi_x(\tau) + Q_e\,.
}
To be explicit, we can equivalently rewrite (\ref{Re-cl-hy}) as
\begin{equation}\label{18}
\ba{l}
\left\{
\ba{rcl}
\dot \Pi_x(\tau)  &=& - \Pi_x(\tau) S +A\,\Pi_x(\tau) + B\Psi(\tau) + P\,\\
\Pi_x(0) &=& \Pi_x(T)\,\\
\ea\right.\\
\dot \Pi_\zeta(\tau)  = - \Pi_\zeta(\tau) S + (\Phi\otimes I_{q_e})\Pi_\zeta(\tau) \,\\
\Pi_z S_D = A_z\, \Pi_z + B_z\, \begin{bmatrix}{\bf 0}_{d\times q_e} & Y_m^{\top} \end{bmatrix}^{\top}\,
\ea
\end{equation}
where  $Y_m = C_m \Pi_x(0) + Q_m$, and $\Psi(\tau)=\Psi_\zeta(\tau)+\Psi_{v_u}e^{-S\tau}$ with $\Psi_\zeta(\tau)=L\Pi_\zeta(\tau)$ and
\vspace{-2mm}
\[
\begin{bmatrix}\Psi_{v_u} \cr \Pi_\zeta(0)S_D\end{bmatrix} = K_z \,\Pi_z + L_z\, \begin{bmatrix}{\bf 0}_{d\times q_d} & Y_m^{\top}\end{bmatrix}^{\top}\,.
\vspace{-2mm}
\]

Putting (\ref{ro}) and the first part of (\ref{18}) together, by Assumption \ref{ass-1} and $m=q_e$, we observe that they reduce to the continuous-time regulator equations (\ref{RE-3}) and have the unique constant solution $\Pi_x(\tau),\Psi(\tau)$, i.e., independent of $\tau$. On the other hand, taking the second  of (\ref{18}) into consideration, we can deduce that $\Pi_\zeta$ is also independent of $\tau$ since it allows for a solution $\Pi_\zeta(\tau)$ if and only if $\dot\Pi_\zeta(\tau)=0$.  Furthermore,  we observe that $\Psi_{v_u}$ is constant, and satisfies $\Psi_{v_u} =[\Psi-\Psi_\zeta]e^{S\tau}$. To make this equality holds for constant $\Psi$ and $\Psi_\zeta$, there necessarily holds $\Psi=\Psi_\zeta$, leading to $\Psi_{v_u}=0$. In view of the previous observations,  the requirement (b) can be easily concluded by using the fact that $P,Q$ are arbitrary matrices. $\blacksquare$

\section{Proof of Lemma \ref{lemma-2}}
\label{app-lemma-2}
Instrumental to the subsequent analysis is the following lemma.
\begin{lemma}\label{lemma-1}
Suppose Assumptions \ref{ass-1} and \ref{ass-2} hold. Let $\mathcal{T}_1(\lambda)=\col(1,\lambda,\ldots,\lambda^{d-1})\otimes I_{q_e}$. Then
\beeq{\label{DNRC}
\mbox{rank}\begin{bmatrix}
A_D-\lambda I_n & L_D\mathcal{T}_1(\lambda) \cr C_e & 0
\end{bmatrix}
= n+q_e\,
}
holds for all $\lambda\in\sigma(S_D)$.

\end{lemma}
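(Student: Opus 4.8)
The plan is to reduce the discretized non-resonance condition \eqref{DNRC} to the continuous-time non-resonance condition \eqref{NRC} in Assumption~\ref{ass-1}.(ii) by carefully unwinding the structure of $L_D$ and exploiting Assumption~\ref{ass-2} (non-pathological sampling). First I would observe that for a fixed $\lambda = e^{\mu T} \in \sigma(S_D)$, the eigenvalue $\mu$ belongs to $\sigma(S)$ (possibly several such $\mu$ map to the same $\lambda$, but by Assumption~\ref{ass-2} applied to $\sigma(S)$ the map is injective on the relevant spectrum). The key computation is to evaluate $L_D \mathcal{T}_1(\lambda)$, where $L_D = \int_0^T e^{A(T-r)} B L e^{(\Phi\otimes I_{q_e}) r}\,{\rm d}r$. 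Since $\mathcal{P}_S(\cdot)$ is the minimal polynomial of $\Phi$, the matrix $\mathcal{T}_1(\lambda) = \col(1,\lambda,\dots,\lambda^{d-1})\otimes I_{q_e}$ is (up to the $I_{q_e}$ factor) a (generalized) eigenvector direction of $\Phi_D = e^{\Phi T}$: one checks that $(\Phi_D\otimes I_{q_e})\mathcal{T}_1(\lambda) = \lambda\,\mathcal{T}_1(\lambda)$ when $\lambda\in\sigma(S_D)\subseteq\sigma(\Phi_D)$, because $\Phi$ and $S$ share a minimal polynomial hence $\sigma(\Phi)=\sigma(S)$ as sets. Consequently $e^{(\Phi\otimes I_{q_e})r}\mathcal{T}_1(\lambda) = e^{\mu r}\mathcal{T}_1(\lambda)$, and therefore
\[
L_D\,\mathcal{T}_1(\lambda) = \left(\int_0^T e^{A(T-r)} e^{\mu r}\,{\rm d}r\right) B\,L\,\mathcal{T}_1(\lambda).
\]
Moreover $L\,\mathcal{T}_1(\lambda)$ is exactly the ``mode map'' associated with the observable pair $(\Phi\otimes I_{q_e},L)$; in the analogous continuous-time picture (see \eqref{RE-4}), $L$ restricted to the $\mu$-eigenspace of $\Phi\otimes I_{q_e}$ is invertible onto $\mathbb{R}^{q_e}$ (this follows from observability of $(\Phi\otimes I_{q_e},L)$ together with $\sigma(\Phi)=\sigma(S)$), so $L\,\mathcal{T}_1(\lambda)$ has full column rank $q_e$.

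Next I would plug this into the rank condition. Writing $N_\mu := \int_0^T e^{A(T-r)} e^{\mu r}\,{\rm d}r$, we must show
\[
\rank\begin{bmatrix} A_D - \lambda I_n & N_\mu B\,L\mathcal{T}_1(\lambda)\cr C_e & 0\end{bmatrix} = n + q_e,\qquad \lambda = e^{\mu T},\ \mu\in\sigma(S).
\]
Since $L\mathcal{T}_1(\lambda)$ has full column rank $q_e$, this is equivalent to showing the matrix $\big[\begin{smallmatrix} A_D-\lambda I_n & N_\mu B \\ C_e & 0\end{smallmatrix}\big]$ has full row rank $n+q_e$ — equivalently, its columns span, i.e. there is no nonzero row vector $(\zeta^\top,\eta^\top)$ annihilating it. Suppose $\zeta^\top(A_D - \lambda I_n) + \eta^\top C_e = 0$ and $\zeta^\top N_\mu B = 0$. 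The heart of the matter is the standard Kalman--Bertram/Kimura-type identity relating $N_\mu$, $A_D - \lambda I_n$, and $A - \mu I_n$: since $(A_D - \lambda I_n) = (A_D - e^{\mu T} I_n)$ and $N_\mu$ satisfies $A N_\mu - \lambda N_\mu = A_D - \lambda I_n$ — more precisely one verifies $(A - \mu I_n)N_\mu = A_D - \lambda I_n$ by differentiating under the integral — one can transfer the left-eigenvector relation for $A_D$ at $\lambda$ to one for $A$ at $\mu$. Here Assumption~\ref{ass-2} is essential: it guarantees $N_\mu$ (and the conjugate factor for any other $\mu'$ with $e^{\mu'T}=\lambda$) is such that $\ker$-interactions don't create spurious obstructions, i.e. $A_D - \lambda I_n$ and $(A-\mu I_n)$ have the same left null structure in the relevant generalized eigenspace.

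Carrying this through, the pair $(\zeta,\eta)$ yields a nonzero pair witnessing rank deficiency of $\big[\begin{smallmatrix} A-\mu I_n & B \\ C_e & 0\end{smallmatrix}\big]$ at $\mu\in\sigma(S)$, contradicting \eqref{NRC}. Hence \eqref{DNRC} holds for all $\lambda\in\sigma(S_D)$. I expect the main obstacle to be the algebraic bookkeeping in the eigenvalue-transfer step: making rigorous that the left-annihilator of $A_D - \lambda I_n$ pulls back, via $N_\mu$ and the non-pathological-sampling hypothesis, to the left-annihilator of $A - \mu I_n$, particularly when $A$ has repeated eigenvalues or Jordan blocks, and when several continuous-time eigenvalues could in principle collapse under $e^{(\cdot)T}$ (ruled out by Assumption~\ref{ass-2}, but this must be invoked cleanly). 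Handling the generalized-eigenvector case for $\mathcal{T}_1(\lambda)$ — if $\Phi$ has nontrivial Jordan structure — is the other place care is needed, though in the companion form chosen for $\Phi$ the computation is concrete. The rest is a routine application of the PBH test and \eqref{RE-4}-type observability arguments already invoked in the paper.
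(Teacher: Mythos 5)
Your route is genuinely different from the paper's and, with two points tightened, it works. The paper never manipulates $L_D\mathcal{T}_1(\lambda)$ directly: it builds an auxiliary \emph{continuous-time} cascade (the plant driven by $BL\zeta$ with $\dot\zeta=(\Phi\otimes I_{q_e})\zeta$, together with the $\Gamma_{\rm f}$-block), argues that this cascade is detectable from $(e,y_m)$ via Assumption \ref{ass-1}, invokes the preservation of detectability under non-pathological sampling (\cite{Kimura1990}, Assumption \ref{ass-2}), and then reads \eqref{DNRC} off the discrete PBH matrix by column operations. You instead stay at the level of the rank identity itself: $L_D\mathcal{T}_1=N_\mu B\,L\mathcal{T}_1$ with $N_\mu=\int_0^Te^{A(T-r)}e^{\mu r}{\rm d}r$, the commuting identity $N_\mu(A-\mu I_n)=A_D-\lambda I_n$, and a pull-back of left annihilators to the continuous-time non-resonance matrix \eqref{NRC}. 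This is more elementary and self-contained, and it exposes exactly where Assumption \ref{ass-2} is used, which the paper's appeal to \cite{Kimura1990} hides.

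Three things to fix. (1) Make the role of Assumption \ref{ass-2} precise rather than gesturing at ``kernel interactions'': after invoking \eqref{NRC} you are left with $\eta=0$ and $\zeta^\top N_\mu=0$, and you need $\zeta=0$; this holds because $N_\mu=e^{\mu T}\int_0^Te^{(A-\mu I_n)s}{\rm d}s$ has eigenvalues $(e^{(\alpha-\mu)T}-1)/(\alpha-\mu)$, $\alpha\in\sigma(A)$, hence is invertible exactly when no $\alpha-\mu$ is a nonzero integer multiple of $2\pi i/T$ --- which is Assumption \ref{ass-2}. (2) The vector $\col(1,\lambda,\dots,\lambda^{d-1})$ is an eigenvector of the companion matrix $\Phi$ only for $\lambda\in\sigma(\Phi)=\sigma(S)$; the kernel of $(\Phi_D-\lambda I_d)\otimes I_{q_e}$ for $\lambda=e^{\mu T}\in\sigma(S_D)$ is spanned by $\col(1,\mu,\dots,\mu^{d-1})\otimes I_{q_e}$, so your identities $(\Phi_D\otimes I_{q_e})\mathcal{T}_1=\lambda\mathcal{T}_1$ and $e^{(\Phi\otimes I_{q_e})r}\mathcal{T}_1=e^{\mu r}\mathcal{T}_1$ require reading $\mathcal{T}_1$ with powers of $\mu$, not $\lambda$ (the lemma statement carries the same notational slip). (3) Your claimed equivalence between nonsingularity of the matrix containing $N_\mu BL\mathcal{T}_1$ and full row rank of the matrix containing $N_\mu B$ is only valid when $L\mathcal{T}_1(\lambda)$ is square, i.e.\ $m=q_e$: for $m>q_e$ a left annihilator $(\zeta,\eta)$ of the former satisfies only $\zeta^\top N_\mu BL\mathcal{T}_1=0$, not $\zeta^\top N_\mu B=0$, and one genuinely needs $\left[\begin{smallmatrix}A-\mu I_n&BL\mathcal{T}_1\\ C_e&0\end{smallmatrix}\right]$ nonsingular, which does not follow from \eqref{NRC} plus observability of $(\Phi\otimes I_{q_e},L)$ alone. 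The paper's detectability claim for its auxiliary system quietly needs the same fact, so this is a gap you share with the paper rather than one you introduced; in the square case $m=q_e$ your argument closes completely.
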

\begin{proof}
Consider the auxiliary system
\[\ba{l}
\dot \xi = (\Phi\otimes I_{q_m}) \xi \,,\quad
\dot x = A x + BL \zeta \,,\quad
\dot \zeta = (\Phi\otimes I_{q_e}) \zeta \,\\
e = C_e x\,,\quad
y_m = C_m x - \Gamma_{\rm f}\xi
\ea
\]
which, by Assumption 1 and with the construction that $(\Phi\otimes I_{q_m},\Gamma_{\rm f})$ and $(\Phi\otimes I_{q_e},L)$ are observable, respectively, can be easily inferred to be detectable. Thus, with Assumption \ref{ass-2} and according to \cite{Kimura1990} again, it can be seen that its discretized form, as
\[\ba{l}
\xi_{k+1} = (\Phi_D\otimes I_{q_m}) \xi_k\,,\quad
x_{k+1} = A_D x + L_D \zeta_k \,\quad\,\\
\zeta_{k+1} = (\Phi_D\otimes I_{q_e}) \zeta_k \,\\
e_k = C_e x_k\,,\quad
y_{m,k}= C_m x_k - \Gamma_{\rm f}\xi_k
\ea
\]
must also be detectable, which, based on the PBH test, is equivalent to saying that the matrix
\[\left[
\ba{ccc}
(\Phi_D-\lambda I_d)\otimes I_{q_m} & 0 & 0\cr
0&A_D -\lambda I_n & L_D\, \cr
0&0& (\Phi_D-\lambda I_d)\otimes I_{q_e} \cr \hline
0& C_e & 0\,\cr
- \Gamma_{\rm f} & C_m &0
\ea
\right]
\]
is full-column-rank for all $\lambda\in\{\lambda\in\mathbb{C} |\, |\lambda| \geq 1\}$. Then considering $\lambda \in \sigma(\Phi_D)$, (\ref{DNRC}) can be concluded by some simple column transformation. $\blacksquare$
\end{proof}

We now proceed to use the PBH test and Lemma \ref{lemma-1} to verify the stabilizability and detectability of (\ref{E-DTS}).
Regarding the stabilizability, it holds if and only if all rows of the following matrix 
\[
\left[
\underbrace{
\ba{cccc}
F_{\rm f}-\lambda I & G_{\rm f}\,C_m & 0 & 0\, \cr
0& A_D-\lambda I & L_D & 0  \cr
0& 0 & -\lambda I  & I \cr
0& 0 & 0& (\Phi_D-\lambda I)\otimes I_{q_e}
\ea}_{\hat A}
\left|
\underbrace{\ba{ccc}
0&0&0\, \cr
B_D&0&0 \cr
0 &I&0\cr
0&0&I
\ea}_{\hat B}\right.
\right]
\]
are independent for all $\lambda\in\{\lambda\in\mathbb{C} |\, |\lambda| \geq 1\}$. We note that  $F_{\rm f}-\lambda I$ is nonsingular for all $\lambda\in\{\lambda\in\mathbb{C} |\, |\lambda| \geq 1\}$ by construction and  $(A_D,B_D)$ is stabilizable by \cite{Kimura1990} and Assumptions \ref{ass-1}.(i). Thus, it can be easily verified that the above matrix is full-row-rank
for all $\lambda\in\{\lambda\in\mathbb{C} |\, |\lambda| \geq 1\}$.

To further explore the detectability,  it is true if and only if for all $\lambda\in\{\lambda\in\mathbb{C} |\, |\lambda| \geq 1\}$, the matrix
\[
\left[\ba{l}
  \hat A \\ \hline
  \hat C\ea
\right]\,, \quad \mbox{with }
\hat C = \begin{bmatrix}
0 & C_e & 0 & 0\,\cr
-\Gamma_{\rm f} & C_m &0 &0
\end{bmatrix}
\]
is full-column-rank. Since $F_{\rm f}+G_{\rm f}\Gamma_{\rm f}=\Phi_D\otimes I_{q_m}$ by construction, the above verification reduces to show
\[
\left[\ba{cccc}
(\Phi_D-\lambda I)\otimes I_{q_m} & 0 & 0 & 0\, \cr
0 & A_D -\lambda I & L_D & 0\, \cr
0 & 0& -\lambda I & I\cr
0 & 0& 0 & (\Phi_D-\lambda I)\otimes I_{q_e}\cr \hline
0 & C_e & 0 & 0\,\cr
-\Gamma_{\rm f} & C_m &0 &0
\ea
\right]\]
is full-column-rank for all $\lambda\in\{\lambda\in\mathbb{C} |\, |\lambda| \geq 1\}$. For all $\lambda\in\{\lambda\in\mathbb{C} |\, |\lambda| \geq 1, \lambda\notin\sigma(\Phi_D)\}$, the above matrix is full-column-rank if and only if $(A_D,C)$ is detectable, which is clearly true by \cite{Kimura1990} and Assumptions \ref{ass-1}.(i).

With this being the case, we turn to investigate the case that $\lambda\in\sigma(\Phi_D)$. Since $(\Phi_D\otimes I_{q_m},\Gamma_{\rm f})$ is observable by construction, by taking appropriate column transformation, the previous verification reduces to show
\[
\mbox{rank}\begin{bmatrix}
 A_D -\lambda I & L_D\mathcal{T}_1(\lambda)\, \cr
 C_e & 0\,\cr
\end{bmatrix}
= n + q_e \, \quad \forall \lambda\in\sigma(\Phi_D)\,,
\]
which clearly is true by recalling (\ref{DNRC}) and the fact that $\sigma(\Phi_D)=\sigma(S_D)$.
The proof is thus completed. $\blacksquare$

\section{Proof of Theorem \ref{theo-3}}
\label{app-theo-3}

Let $\tilde x_D:=x_D-\Pi_xw_D$, $\tilde\zeta_D:=\zeta_D-\Pi_\zeta w_D$, and $\tilde z := z-\Pi_z w_D$.
The closed-loop discrete-time system (\ref{E-DTS})-(\ref{DR}) can be compactly rewritten as
\[\ba{l}
\dot{\tau} = 0\,,\qquad \dot{\tilde X}_{cl} = 0\,,\quad \qquad \,\,\mbox{for $\tau\in[0,T)$}\\
{\tau}^+ = 0\,,\,\quad {\tilde X}_{cl}^+ = A_{cl}{\tilde X}_{cl}\,,\quad \mbox{for $\tau = T$}
\ea\]
with $\tilde X_{cl}:=\col(\tilde x_D,\tilde \zeta_D,\tilde z)$ and
\[
A_{cl}: = \begin{bmatrix}
           A_{D}+B_DL_{z,u} & L_{D} & K_{z,u} \cr
           L_{z,\zeta}C & 0 & K_{z,\zeta} \cr
           B_zC & 0 & A_z \cr
    \end{bmatrix}\,.
\]
With the controller (\ref{DR}) satisfying Theorem \ref{theo-1}, there exists a symmetric positive definite matrix $P_{cl}$ and a positive constant $\lambda_{cl}<1$ such that $V_{cl} = \tilde X_{cl}^\top P_{cl}\tilde X_{cl}$  satisfies
\beeq{\label{eq:V_cl}
\dot V_{cl} = 0\,,\qquad
V_{cl}^+ = \tilde X_{cl}^\top A_{cl}^\top P_{cl}A_{cl}\tilde X_{cl}\leq \lambda_{cl} V_{cl}\,.
\vspace{-3mm}
}
To ease the subsequent analysis, we propose a clock $\tau_{cl}$ dynamics as
\[\ba{ll}
\dot \tau_{cl} = 1\,,\quad &\mbox{for }\,  \tau_{cl}\in\bigcup_{i=1}^N[0,\frac{i}{N}T)\\
\tau_{cl}^+ = (1 - \lfloor\frac{\tau_{cl}}{T}\rfloor)\tau\,,\quad &\mbox{for } \lfloor\frac{N\tau_{cl}}{T}\rfloor\in\mathbb{N}_+\,.
\ea\]
With such a clock, we set $\varepsilon := u-\hat u$, and can rewrite the closed-loop system (\ref{ini-sys})-(\ref{eq:control})-(\ref{eq:hat_u}) under the error coordinates $\tilde X_{cl}=\col(\tilde x_D,\tilde \zeta_D,\tilde z)$ as
\[
\left\{
\ba{l}
\dot \tau_{cl} = 1\,\\
\dot w_D = 0\,\\
\dot{{\tilde X}}_{cl} = \begin{bmatrix} e^{-A\tau_{cl}}B\varepsilon\cr {\bf 0}_{dq_e+n_z} \end{bmatrix}\,\\
\dot\varepsilon = -L(\Phi\otimes I_{q_e})e^{(\Phi\otimes I_{q_e})\tau_{cl}}\tilde\zeta_D - L(\Phi\otimes I_{q_e})\Pi_\zeta e^{\tau_{cl} S}w_D\,\\
\ea\right.
\]
for $\tau_{cl}\in\bigcup_{i=1}^N[0,\frac{i}{N}T)$, and
\[
\left\{
\ba{rcl}
\tau_{cl}^+ &=& (1 - \lfloor\frac{\tau_{cl}}{T}\rfloor)\tau_{cl}\,\\
w_D^+ &=& [(1-\lfloor\frac{\tau_{cl}}{T}\rfloor) + \lfloor\frac{\tau_{cl}}{T}\rfloor S_D ]\, w_D\,\\
{\tilde X}_{cl}^+
&=&\bigg((1-\lfloor\frac{\tau_{cl}}{T}\rfloor) I + \lfloor\frac{\tau_{cl}}{T}\rfloor A_{cl}\bigg){\tilde X}_{cl}\,\\
\varepsilon^+ &=& 0
\ea\right.
\]
for $\lfloor\frac{N\tau_{cl}}{T}\rfloor\in\mathbb{N}_+$.

Let $k_1=\frac{1}{T}\ln \frac{1}{\lambda_{cl}}$, and
\[
\alpha^\ast = \mbox{argmin}_{\alpha\in\mathbb{R}_+}\frac{32\phi_1^2e^\alpha + 8\phi_2\lambda_{cl}}{\alpha k_1^2\sigma_m(P_{cl})\lambda_{cl}^2}\,
\]
with $\phi_1=\max\limits_{{\tau_{cl}}\in[0,T]}\|e^{-A{\tau_{cl}}}B\|\|P_{cl}\|$ and $\phi_2=\max\limits_{{\tau_{cl}}\in[0,T]}\|L\|\|\Phi e^{\Phi{\tau_{cl}}}\|$.
Then set $k_2:=\alpha^\ast k_1 N$ and
\[
N \geq N^\ast:=\frac{32\phi_1^2e^{\alpha^\ast} + 8\phi_2\lambda_{cl}}{\alpha^\ast k_1^2\sigma_m(P_{cl})\lambda_{cl}^2}\,,
\]
and choose a Lyapunov function as $U = e^{-k_1{\tau_{cl}}} V_{cl} +  e^{-k_2 s} \|\varepsilon\|^2$ with $s={\tau_{cl}}-\lfloor\frac{N{\tau_{cl}}}{T}\rfloor\frac{T}{N}$. It is clear that $s\leq \frac{T}{N}$, and $\dot s=1$ for ${\tau_{cl}}\in\bigcup_{i=1}^N[0,\frac{i}{N}T)$ and $s^+=0$ for $\lfloor\frac{N{\tau_{cl}}}{T}\rfloor\in\mathbb{N}_+$.

During jumps, we consider two cases: (i) ${\tau_{cl}} <T$ and (ii) ${\tau_{cl}}=T$. For ${\tau_{cl}} <T$, we have
$U^+ = e^{-k_1{\tau_{cl}}} V_{cl} \leq U$.
For ${\tau_{cl}}=T$, by (\ref{eq:V_cl}), we have
$U^+ = \lambda_{cl}\widetilde X_{D}^\top P_{cl}\widetilde X_{D} \leq U$.
Hence, during jumps we always have $U^+ \leq U$.

During flows, with (\ref{RE-4}) and (\ref{eq:V_cl}) we compute the time derivative of $U$ as
\[\ba{l}
\dot U \leq -k_1e^{-k_1{\tau_{cl}}} \tilde X_{cl}^\top P_{cl}\tilde X_{cl} + 2 e^{-k_1{\tau_{cl}}} \|\tilde X_{cl}^\top P_{cl}\|\|e^{-A{\tau_{cl}}}B\varepsilon\|\,  \,\\ \quad -k_2e^{-k_2 s} \|\varepsilon\|^2
  + 2e^{-k_2 s} \|\varepsilon\|(\phi_2\|\tilde\zeta_D\|+ \|L(\Phi\otimes I_{q_e})\Pi_\zeta w\|)\,\\
    \leq -(k_1e^{-k_1{\tau_{cl}}} \sigma_{m}(P_{cl})-\frac{4\phi_2}{k_2})\|\tilde X_{cl}\|^2  + 2 e^{-k_1{\tau_{cl}}} \phi_1\|\tilde X_{cl}\|\|\varepsilon\|\,  \,\\ \quad -\frac{k_2}{2}e^{-k_2 \frac{T}{N}} \|\varepsilon\|^2
  + \frac{4}{k_2}\| \Psi S w\|^2\,\\
  \leq -\frac{k_1}{2}e^{-k_1T} \sigma_{m}(P_{cl})\|\tilde X_{cl}\|^2 -\frac{k_2}{4}e^{-k_2 \frac{T}{N}} \|\varepsilon\|^2  + \frac{4}{k_2}\| \Psi S w\|^2\,.
\ea\]
Thus, we have
\beeq{\label{eq:Xcl}\ba{l}
\lim\limits_{t+j\rightarrow\infty}\|\varepsilon(t,j)\|^2\leq \frac{4 e^{\alpha^\ast}}{{\alpha^\ast}^2 k_1^2N^2\lambda_{cl}}\| \Psi S w\|^2\,\\
\lim\limits_{t+j\rightarrow\infty}\|\tilde X_{cl}(t,j)\|^2 \leq  \frac{8}{\alpha^\ast k_1^2N \lambda_{cl}\sigma_m(P_{cl})}\| \Psi S w\|^2\,.
\ea}
This then yields that the trajectories of the resulting closed-loop system (\ref{ini-sys})-(\ref{eq:control})-(\ref{eq:hat_u}) are bounded.

Regarding the bound of $\|e\|$, we note that by (\ref{eq:def-xD}) and (\ref{REs-2}), we have $e=C_e\tilde x$ with
\vspace{-3mm}
\[\ba{l}
\tilde x  =  e^{A{\tau_{cl}}}\tilde x_D+ \int_0^{\tau_{cl}} e^{-A\,(r-{\tau_{cl}})}BL e^{(\Phi\otimes I_{q_e})r}{\rm d} r \,\tilde\zeta_D \\ \qquad + \int_0^{\tau_{cl}} e^{-A\,(r-{\tau_{cl}})}{\rm d} r \,B \,(K_{z,u}\tilde z + L_{z,u}\tilde x_D)  \,.
\ea\]
Thus, using the latter of (\ref{eq:Xcl}) and ${\tau_{cl}}\in[0,T]$, it can be verified that there exists a $\gamma^\dag$ such that (\ref{eq:e-bound}) holds. $\blacksquare$


\end{document}